% Example LaTeX document for GP111 - note % sign indicates a comment
\documentclass[11pt,a4paper]{amsart}
% Default margins are too wide all the way around. I reset them here
%\setlength{\topmargin}{-.5in}
%\setlength{\textheight}{9in}
%\setlength{\oddsidemargin}{.125in}
%\setlength{\textwidth}{6.25in}
\oddsidemargin=0cm
\evensidemargin=0cm
\textheight=23cm
\baselineskip 18pt \textwidth 17cm

%\pagenumbering{arabic}
\usepackage{amssymb,amsfonts,amsmath}
\usepackage{slashbox,booktabs}
\usepackage{amsthm}
\usepackage{empheq}
\usepackage[english]{babel}
\usepackage[utf8]{inputenc}
\usepackage{latexsym}
\usepackage{epsfig}
\usepackage[all]{xy}
\usepackage{tikz}
\usepackage{cancel}

\newcommand*\widefbox[1]{\fbox{\hspace{2em}#1\hspace{2em}}}

\usepackage{amsmath, amscd}

%\newenvironment{proof}[1][Proof]{\begin{trivlist}
%\item[\hskip \labelsep {\bfseries #1}.]}{\end{trivlist}}

%% environment of symmetric pairs
%% shortcut for bold

%\renewenvironment{proof}[1][\proofname]{{\bfseries #1. }}

\makeatletter
\renewenvironment{proof}[1][\proofname]{%
	\par\pushQED{\qed}\normalfont%
	\topsep6\p@\@plus6\p@\relax
	\trivlist\item[\hskip\labelsep\bfseries#1\@addpunct{.}]%
	\ignorespaces
}{%
	\popQED\endtrivlist\@endpefalse
}
\makeatother

\newtheorem{theorem}{Theorem} [subsection]
\newtheorem{lemma}[theorem]{Lemma}
\newtheorem{proposition}[theorem]{Proposition}
\newtheorem{corollary}[theorem]{Corollary}

\newtheorem*{question*}{Question}

\newcommand{\CC}{{\mathbb{C}}}
\newcommand{\QQ}{{\mathbb{Q}}}

\newcommand{\PP}{{\mathbb{P}}}

\newcommand{\m}{\mathcal}
\newcommand{\Gr}{{\textbf{Gr}(2,4)}}

%% fix diagrams arrows:
%%\newcommand {\tlap}[1]{\vbox to Opt{\vss\hbox{#1}}}
%%\necommand {\blap}[1] {\vbox to Opt{\hbox{#1}\vss}}

%% some self-made commands

%% cohomology
%% 1: degree 2: group 3: module

%% short exact sequence

%% long exact sequece
%% 1: A   2: B   3: C   4: G

%% shortcut for mathfrak

%% shortcuat for mathcal

%% making index twice smaller than default

%% kernel of cohomologies

\begin{document}

\title{Counting surfaces singular along a line in $\PP^3$}
\author{Shachar Carmeli, Lev Radzivilovsky } 
% \\ \slshape{shachar.carmeli@weizmann.ac.il, levr78@hotmail.com}}

\date{\today}
\maketitle 

\begin{abstract}
	We enumerate the surfaces of degree $d$ in $\PP^3$ having a singular
	line of order $k$, passing through $\delta$ generic points (where $\delta$ is 
	the dimension of moduli space of such surfaces). 
\end{abstract}

\section{Introduction}
Consider surfaces in $\PP^3$ of degree $d$, having a singularity of degree $k$ along 
a line (which is not fixed).
Such surfaces may be described by homogeneous polynomials of degree $d$,
for which there exists a line such that the polynomial of the surface is in 
the $k$-th power of the ideal of the line. 
The moduli space of such surfaces is of dimension 
$\delta= {d - k + 2 \choose 2}\cdot \frac{{d + 2k + 3}}{3} + 3$.
The natural enumerative question regarding such surfaces is:

\begin{question*}
	\label{qsn: main question}
	Given $\delta$ generic points in $\PP^3$, how many surfaces of degree $d$ singular along a line to degree $k$ pass through those $\delta$ points? 
\end{question*}
In other words, what is the degree of the locus of such surfaces in the projective space of all surfaces of degree $d$?

Denote by $N$ the number of such surfaces.
The answer to this question is given by the following
formula
\[N=\frac{\phi}{4} \left[  (3 \phi)^3 -2(9 \phi^2+1)(d-2k+1) + 
\phi \left(2d(d+1)-12k(d-k+1)+5\right)  \right],\]
where $\phi = \frac{1}{12}(d-k+2)(d-k+1)(k+1)k$.

Our interest in this question started with surfaces of degree $4$ having 
a singular line, but the simplest case is surfaces of degree $2$ 
having a singular line, which are just unions of two planes. To specify $2$ planes, 
we must fix $6$ points, since in order to specify one plane, we need $3$ points. Given $6$ generic points, there are $10$ ways to split them into $2$ triples, and each $3$ generic points specify a unique plane, so in that simplest particular case, $N=10$.

Similar problems have been studied by several authors. For example, the fact that the formula for $N$ is a polynomial in $d$  follows a-priori from the results in \cite{CLV}. Enumerative properties of varieties singular along a sub-variety of positive dimension has also been studied in \cite{KKN} from a different perspective. A similar problem with a point instead of a line where studied in \cite{K}. 
The special case $d=3$ and $k=2$ of our calculation can be obtain using different methods, as carried in \cite{CV,C,M}.  
\subsection{Acknowledgments}

The authors want to thank Eugenii Shustin for introducing us to Enumerative Geometry.
We also want to thank Liam Hanany for fixing mistakes in the previous version of 
the computation (which covered only a particular case but was much longer,
so verifying it was much more challenging), and Guy Kapon for his interest. 
Finally, we are very grateful to the anonymous referee 
for referring us to the enumerative computations for the case of cubic scrolls and for many other critical remarks, which allowed us to correct a mistake in our calculations and improve the presentation.   

S. Carmeli is partially supported by the Adams fellowship of the Israel Academy of Science and Humanities. 
\section{Tools and Notations}
We introduce tools and notations which are necessary for our computations. Large parts of this section are well-known, and we recall them here only to make the text more readable and self-contained.
\subsection{The Space of Lines in Projective Space: $\Gr$}
The space of lines in $\PP^3$, which is the same as the space of two-dimensional linear subspaces in $\CC^4$, is denoted $\Gr$. The standard textbook about Grassmanians is [GH] 
(the last part of chapter 1). 
Since we are going to do computations related to the cohomologies of $\Gr$, 
we remind the multiplication table of the cohomology ring of $\Gr$. 
The cohomology classes in different dimensions are 

\[\begin{array}{*{20}{c}}
0& & &2& &4& &6& & &8\\
1,& & &{\sigma_1},& &\sigma_{1,1}, \sigma_2,& &\sigma_{2,1},& & &\sigma _{2,2}.
\end{array}\]

Geometrically, $\sigma_1$ is represented by the collection of lines intersecting a specific line,
$\sigma_{1,1}$ by the lines inside a specific plane, $\sigma_{2}$ by the lines passing through a point,
$\sigma_{2,1}$ by the lines simultaneously being in a specific plane and passing through a specific point in that
plane, and $\sigma_{2,2}$ by a specific line. 

We shall use the multiplication table for $H^*(\Gr)$:
\[ \begin{array}{*{90}{c}}
\sigma_1   \cdot \sigma_1       = \sigma_{1,1} + \sigma_2& &
\sigma_1   \cdot \sigma_{1,1}   = \sigma_{2,1}& &
\sigma_1   \cdot \sigma_2       = \sigma_{2,1}& &
\sigma_1   \cdot \sigma_{2,1}   = \sigma_{2,2}\\
\sigma_{1,1} \cdot \sigma_{1,1} = \sigma_{2,2}& & 
\sigma_2     \cdot \sigma_2     = \sigma_{2,2}& & 
\sigma_{1,1} \cdot \sigma_2     = 0
\end{array}\]
The moduli space of surfaces having a singularity of degree $k$ along a line 
will be denoted $SingL_{k,d}$.

Let $V_{k,d}$ be the vector bundle on $\Gr$ with fiber at a 2-dimensional subspace $\ell$ being the vector space of all polynomials of degree $d$ vanishing on $\ell$ to the $k$-th order. 
Let $\PP V_{k,d}$ be the projectivization of that bundle, which is the bundle of projective spaces with fiber at $\ell$ given by $\PP (V_{k,d}|_\ell)$.
There is a natural forgetful map $forg:\PP V_{k,d} \to SingL_{k,d}$ given by $forg(\ell, f) = f$.

Now we compute the dimension $\delta$ of $SingL_{k,d}$. The map $forg$ is a birational map and hence $\delta=\dim(\PP V_{k,d})$. 
The dimension of $\Gr$ is 4, the dimension of the fiber $V_{k,d}|_\ell$ for $\{x=y=0\}$ is equal to the number of monomials $x^Ay^Bz^Ct^D$, such that $A+B+C+D=d$ and $A+B\geq k$. Projectivization
reduces the dimension by $1$.
So the computation of $\delta$ boils down to a combinatorial of separating $k$ stars written down in a line by $3$ bars, so that there are at least $k$ stars before the second bar.
If there are at least $k$ stars even before the first bar, the number of possibilities 
is ${d-k+3 \choose 3}$. Otherwise, there are $k$ possible placements for the first bar, 
and it remains to place $2$ bars among $d-k$ stars, which gives $k\cdot{d-k+2 \choose 2}.$
Combining the above we get
\[\delta={d - k + 2 \choose 2}\cdot \frac{{d + 2k + 3}}{3} + 3.\]

A requirement for a surface to pass via a point in $\PP^3$ specifies a divisor,
which corresponds to $h \in H^2(SingL_{k,d})$ by Poincare duality. 
Our question is about the intersection number on the moduli space,
and it is equivalent to the computation of $h^\delta$. 
We shall perform this computation in $H^*(\PP V_{k,d})$, which is equivalent.

The standard formula for the cohomology ring of the projectivization  of a vector bundle in terms of the Chern polynomial is (for instance \cite{H}, Appendix A) is: 
\[H^*(\PP \mathcal{E}) = H^*(\mathcal{B})[t]/c_\mathcal{E}(t),\] 
where $\mathcal{B}$
is the base space, $t$ is the cohomology class of the hyperplane section, 
\[c_\mathcal{E}(t) = t^n+c_1t^{n-1}+c_2t^{n-2}+...,\] a. k. a. the Chern polynomial, and $n$ is the rank of the bundle.

Firstly, note that $forg^*(h)=t$. Indeed, $t$ is defined by a linear equation, 
which outside of a singular line (which is of codimension $2$) specifies 
a hyperplane section in the fiber.
Therefore, it suffices to compute $t^\delta$ modulo the Chern polynomial of $t$, 
and extract the coefficient of $t^{\delta-4}\cdot \sigma_{2,2}$.

We shall see that it is even better to compute the inverse Chern polynomial 
rather than the Chern polynomial. 
The inverse Chern polynomial is defined as follows:
given Chern polynomial \[c=1+c_1+c_2+...,\] where $c_i\in H^{2i}$,
the inverse Chern polynomial is $c^{-1}=1+d_1+d_2+...$, 
where $d_i\in H^{2i}$, so that $c^{-1}\cdot c= 1$.

We may write the Chern polynomial with a formal variable $t$ 
(in such a way that if $t$ would be in $H^2$, the Chern polynomial 
would be homogeneous) 
to keep track of cohomological degrees. When we substitute $t=1$, 
we get the usual Chern polynomial. For instance, we may write:
\[c(t)=t^{\delta-3}+t^{\delta-4}c_1+t^{\delta-5}c_2+t^{\delta-6}c_3+t^{\delta-7}c_4,\]
\[c^{-1}(t)=t^3+t^2d_1+t\cdot d_2+d_3+t^{-1}d_4.\]
The product of these two polynomials is $t^\delta$.

Hence, if we substitute $t=forg^*(h)$, and $c_\mathcal{E}(t) = 0$,
then the product of $c(t)$ and any other cohomology class is zero, so 
\[c(t) \cdot (t^3+t^2d_1+t\cdot d_2+d_3) = 0.\]
The same reasoning does not apply to $c(t) \cdot t^{-1}$, since it is not an actual cohomology class. Hence, we get the formula 
\[t^{\delta}=t^{\delta-4}d_4+t^{\delta-5}c_1d_4+... \pmod{c(t)}.\]
Therefore, if we wanted to compute $t^\delta$ modulo the Chern polynomial of $t$, 
and extract the coefficient of $t^{\delta-4}\cdot \sigma_{2,2}$, the answer is $d_4$.

Hence, what we really need is the \textbf{free coefficient of the inverse Chern polynomial}.

\subsection{Resolution of the Bundle}

Let $X=\Gr\times \PP^3$ and let $Fl_{1,2,4}=Z\subseteq X$ denote the subvariety of pairs of a line $\ell$ and plane $U$ such that $\ell\subseteq U$ (flags of type $(1,2,4)$). 

Let $I_Z$ denote the ideal sheaf of $Z$ in $X$. 
Let $\pi_1: X\to \Gr$ and $\pi_2:X\to \PP^3$ denote the projections.
If $F$ is a sheaf on $X$ we denote by $F(d)$ the sheaf $F\otimes \pi_2^*\m{O}_{\PP^3}(d)$.

Recall that $V_{k,d}$ denote the vector bundle on $\Gr$ which assigns with each line 
$\ell$ the vector space of homogeneous polynomials on $V=\CC^4$ that vanishes to order $k$ on $\ell$.
Denote by $P(d)$ the vector space of homogeneous polynomials of degree $d$ in $4$ variables, so that 
\[P(d)\cong \Gamma(\PP^3,\m{O}_{\PP^3}(d)).\] 
We have a natural embedding $\phi_{k,d}: V_{k,d} \to \mathcal{O}_{\Gr}\otimes_\CC P(d)$. 
The sections of $\mathcal{O}_{\Gr}\otimes_\CC P(d)$ over an open set $U$ have the form 
$\sum_J f_J \otimes x^J$ where for every multi-index $J=(j_1,j_2,j_3,j_4)$ we have $x^J:= x_1^{j_1}x_2^{j_2}x_3^{j_3}x_4^{j_4}$
and the $f_J$-s are some regular functions on $U$.
Such a section belongs to $V_{k,d}$ exactly when for every line $\ell \in U$, the homogeneous polynomial 
$\sum_J f_J(\ell)x^J$ vanishes to order $k$ on $\ell$, or in other words if 
$\sum_J f_J(\ell)x^J\in \Gamma(\PP^3,I_\ell^{k}(d))$ for every $\ell \in U$. 

On the other hand, we have a natural embedding $(\pi_1)_*(I_Z^k(d))\subseteq (\pi_1)_* \m{O}_X(d)\cong \mathcal{O}_{\Gr}\otimes_\CC P(d)$ so we can consider it as a sub-sheaf of $\mathcal{O}_{\Gr}\otimes_\CC P(d)$.  
The sections of this sub-sheaf are those sections $\sum_J f_J x^J$ such that the section 
$\sum_J (f_J \circ \pi_1) x^J$ of $\m{O}_X(d)$ vanish on $Z$ to order $k$.

\begin{proposition}
	\label{prop: equality of important bundle to push of ideal}
	The two sub-sheaves $V_{k,d}$ and $(\pi_2)_*I_Z^k(d)$ of $\m{O}_{\Gr}\otimes_\CC P(d)$ are the same.
\end{proposition}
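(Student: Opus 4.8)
The plan is to prove the equality of the two subsheaves locally on $\Gr$, by comparing their sections over a covering by standard affine charts and invoking the explicit description already recorded before the statement. Fix such a chart $U\subseteq \Gr$; since $\pi_1\colon X\to\Gr$ restricts to the trivial projection $U\times\PP^3\to U$, one has $(\pi_1)_*\m{O}_X(d)\cong\m{O}_U\otimes_\CC P(d)$, and $(\pi_1)_*I_Z^k(d)$ is the subsheaf whose sections over $U$ are the degree-$d$ homogeneous polynomials $s=\sum_J f_J\,x^J$, with $f_J\in\m{O}(U)$, that lie in the ideal $\Gamma(U\times\PP^3,I_Z^k)$. By the definition of $V_{k,d}$ recalled above, the task reduces to showing that such an $s$ vanishes on $Z$ to order $k$ over all of $U\times\PP^3$ if and only if, for every $\ell\in U$, its restriction $\sum_J f_J(\ell)\,x^J$ lies in $I_\ell^k$.

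First I would make $Z$ explicit on the chart. Parametrize $\ell\in U$ as the row span of $\left(\begin{smallmatrix}1&0&a&b\\0&1&c&e\end{smallmatrix}\right)$, so that $(a,b,c,e)$ are coordinates on $U\cong\CC^4$; a point $[x_1:x_2:x_3:x_4]$ lies on $\ell$ exactly when $G_1:=x_3-ax_1-cx_2$ and $G_2:=x_4-bx_1-ex_2$ both vanish. Hence over $U\times\PP^3$ the ideal sheaf $I_Z$ is the complete intersection $(G_1,G_2)$, cut out by two sections that are linear in the fibre coordinates $x_i$ with coefficients regular on $U$, so $I_Z^k=(G_1,G_2)^k$. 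Restricting to a fibre $\{\ell\}\times\PP^3$ sends $G_1,G_2$ to the two linear forms cutting out $\ell$, whence $(I_Z^k)|_\ell=I_\ell^k$; this identification is what gives the two vanishing conditions a chance to agree.

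The clean way to finish is a fibrewise-linear change of coordinates: set $y_1=x_1$, $y_2=x_2$, $y_3=G_1$, $y_4=G_2$, a base-dependent element of $\GL_4$ preserving the grading. In the $y$-coordinates $I_Z=(y_3,y_4)$ is a monomial ideal, so $I_Z^k=(y_3,y_4)^k$ consists precisely of those polynomials all of whose monomials $y^J$ satisfy $j_3+j_4\ge k$. Writing $s=\sum_J c_J\,y^J$ with $c_J\in\m{O}(U)$, the global condition $s\in\Gamma(U\times\PP^3,I_Z^k)$ reads: every $J$ with $c_J\not\equiv 0$ on $U$ has $j_3+j_4\ge k$; the fibrewise condition reads: for every $\ell\in U$, every $J$ with $c_J(\ell)\neq 0$ has $j_3+j_4\ge k$. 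These two statements are equivalent, since a coefficient $c_J$ is nonzero at some point of $U$ if and only if it is not identically zero. This yields equality of the sections over each chart, hence equality of the two subsheaves.

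The step that requires care --- and the only place where the geometry really enters --- is the identification of global vanishing along $Z$ with fibrewise vanishing along the lines $\ell$; in general, vanishing to order $k$ in a total space is strictly stronger than vanishing to order $k$ on every fibre, because derivatives in the base directions intervene. Here that difficulty evaporates precisely because $Z$ is a relative complete intersection whose defining equations $G_1,G_2$, after the linear change above, are honest fibre coordinates $y_3,y_4$ involving no base derivatives, so $I_Z^k$ is a monomial ideal whose membership is detected coefficient-by-coefficient, identically to the fibrewise test. The remaining ingredients --- the pushforward computation $(\pi_1)_*\m{O}_X(d)\cong\m{O}_{\Gr}\otimes_\CC P(d)$ and the patching of the chartwise equalities into an equality of subsheaves --- are routine.
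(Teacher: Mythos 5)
Your proof is correct, but it takes a genuinely different route from the paper's. The paper extracts a general lemma: for a smooth morphism $f\colon X\to Y$ of smooth varieties and a smooth subvariety $Z$ with $f|_Z$ still smooth, a section of a line bundle lies in $I_Z^k\otimes L$ if and only if its restriction to every fibre $X_y$ lies in $I_{Z\cap X_y}^k\otimes L|_{X_y}$; this is proved by induction on $k$, using the identification $I_Z^{k-1}/I_Z^k\cong Sym^{k-1}(CN_{Z/X})$ and the fact that the conormal bundle of $Z$ restricts isomorphically to the conormal bundle of $Z\cap X_y$ in $X_y$. The proposition is then the instance where $f$ is the projection to $\Gr$ and $Z=Fl_{1,2,4}$. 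You instead work on a standard affine chart of $\Gr$, straighten $Z$ into the coordinate subspace $\{y_3=y_4=0\}$ by a fibrewise linear substitution with coefficients in $\m{O}(U)$, and reduce both the global and the fibrewise order-$k$ conditions to the statement that every monomial with not-identically-zero (respectively, somewhere-nonzero) coefficient satisfies $j_3+j_4\ge k$; these agree because $U$ is reduced. Both arguments ultimately rest on the same geometric fact --- $Z$ is a relative complete intersection over $\Gr$ cut out by fibre coordinates, so no base-direction derivatives enter the vanishing condition --- but yours is more elementary and self-contained, while the paper's lemma is coordinate-free and reusable in other incidence situations. Two small remarks: you silently need that the degree-$d$ global sections of the sheaf $I_Z^k(d)$ on $U\times\PP^3$ coincide with the degree-$d$ part of the graded ideal $(y_3,y_4)^k$, which holds because this monomial ideal is saturated (checking membership on the chart $y_1\ne 0$ already suffices); and you correctly use the projection to $\Gr$ (the paper's own notation oscillates between $\pi_1$ and $\pi_2$ for this map), which is the intended reading of the statement.
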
  

This result follows from a general lemma that we shall now state: 

\begin{lemma}
	Let $f:X\to Y$ be a smooth morphism of smooth algebraic varieties. Let $Z\subseteq X$ be a smooth subvariety of $X$ such that the restriction of $f$ to $Z$ remains smooth. Let $L$ be a line bundle on $X$. 
	Then, a section $\alpha$ of $L$ on an open set $V$ of $X$ is in $(I_Z^k \otimes L)(V)$ if and only if for every 
	$y \in Y$, the restriction to the fiber $\alpha|_{X_y\cap V}$ comes from a section of $I_{Z\cap X_y}^k\otimes L|_{X_y}$.   
\end{lemma}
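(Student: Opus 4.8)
The plan is to reduce the statement to a purely local computation in coordinates adapted to $f$ and $Z$. Since membership of a section in an ideal sheaf is a local (stalk-wise) condition on $X$, and since $L$ is a line bundle, I would first work in an (étale) neighborhood of a point $z\in Z$ and trivialize $L$ there, so that the claim becomes: a regular function $\alpha$ lies in $I_Z^k$ if and only if its restriction to each fiber lies in $I_{Z\cap X_y}^k$. Write $y=f(z)$, let $n$ be the relative dimension of $f$, and let $r$ be the codimension of $Z\cap X_y$ in $X_y$, which equals the codimension of $Z$ in $X$.

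The main step is the construction of adapted relative coordinates. Because $f$ is smooth, $\Omega_{X/Y}$ is locally free of rank $n$, so near $z$ one can choose relative coordinates $t_1,\dots,t_n$ whose relative differentials freely generate $\Omega_{X/Y}$, completing to local coordinates on $X$ together with coordinates pulled back from $Y$. Because $Z$ is smooth, the immersion $Z\hookrightarrow X$ is regular and $I_Z/I_Z^2$ is locally free of rank $r$. The decisive use of the hypothesis that $f|_Z$ is smooth is that the relative conormal sequence
\[0\to I_Z/I_Z^2\to \Omega_{X/Y}|_Z\to \Omega_{Z/Y}\to 0\]
is exact with locally free terms, so that $I_Z/I_Z^2$ embeds as a \emph{subbundle} of $\Omega_{X/Y}|_Z$. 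This allows me to choose generators $g_1,\dots,g_r$ of $I_Z$ whose relative differentials $dg_1,\dots,dg_r$ are linearly independent in $\Omega_{X/Y}$, hence form part of a relative coordinate system $g_1,\dots,g_r,t_{r+1},\dots,t_n$. In these coordinates $Z=\{g_1=\dots=g_r=0\}$, the $g_i$ are fiber directions, and restriction to $X_y$ carries $I_Z$ onto $I_{Z\cap X_y}=(g_1,\dots,g_r)$; taking $k$-th powers gives $(I_Z^k)|_{X_y}=I_{Z\cap X_y}^k$, which is what makes the fiberwise condition meaningful in the first place.

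With these coordinates the two implications follow from a single Taylor-expansion argument, which I would phrase through the associated graded ring $\operatorname{gr}_{I_Z}\mathcal{O}_X\cong(\mathcal{O}_X/I_Z)[g_1,\dots,g_r]$ (an isomorphism valid because $g_1,\dots,g_r$ is a regular sequence). Membership $\alpha\in I_Z^k=(g_1,\dots,g_r)^k$ is equivalent to the vanishing of every graded component of $\alpha$ of $g$-degree $<k$; each such component is a section of $\operatorname{Sym}^j(I_Z/I_Z^2)$ on $Z$, i.e. a function of $y_1,\dots$ and $t_{r+1},\dots,t_n$. Restricting to $X_y$ simply fixes the base coordinates, so $\alpha|_{X_y}\in I_{Z\cap X_y}^k$ says exactly that these same components vanish after evaluating $y$. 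The ``only if'' direction is then immediate, and for ``if'' I note that a section of a locally free sheaf on the reduced variety $Z$ that vanishes on every slice $Z\cap X_y$ vanishes at every closed point of $Z$, hence is zero; since $f|_Z$ is smooth, and in particular open, these slices cover a neighborhood of $z$ and the hypothesis over all $y$ supplies exactly this vanishing.

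I expect the only real obstacle to be the coordinate construction of the second paragraph, and specifically the careful use of all three smoothness hypotheses to arrange that $I_Z$ is generated by functions sitting among the relative coordinates. This is precisely where smoothness of $f|_Z$ (rather than mere smoothness of $Z$) is indispensable: without it the generators of $I_Z$ could mix base and fiber directions, the identity $(I_Z^k)|_{X_y}=I_{Z\cap X_y}^k$ could fail, and the fiberwise criterion would lose its meaning. The remaining points—passing between the étale-local model and the given open set $V$, and reinstating the line-bundle twist after trivialization—are routine.
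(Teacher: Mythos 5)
Your proof is correct and follows essentially the same route as the paper's: the adapted relative coordinates $g_1,\dots,g_r$ and the graded pieces $\operatorname{Sym}^j(I_Z/I_Z^2)$ are the local, coordinate-based incarnation of the paper's key isomorphism $CN_{Z/X}|_{X_y}\cong CN_{Z\cap X_y/X_y}$ and of its induction on $k$ through the quotients $I_Z^{k-1}/I_Z^k\cong \operatorname{Sym}^{k-1}(CN_{Z/X})$. Both arguments conclude identically, by observing that a section of a vector bundle on $Z$ vanishing at every point (equivalently, on every slice $Z\cap X_y$) is zero.
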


\begin{proof}
	Clearly if $\alpha \in (I_Z^k \otimes L)(V)$, then its restriction to each fiber $X_y$ vanishes to order $k$ on $Z\cap X_y$. We prove the converse by induction on $k$. The case $k=1$ follows from the fact that a section of a line bundle vanishes on a subvariety if and only if it vanishes on each point of this subvariety, together with the fact that the preimages of points of $Y$ cover $X$. If $\alpha|_{X_y}$ vanishes to order $k$ on $Z\cap V \cap X_y$, then by the inductive hypothesis $\alpha \in I_Z^{k-1}\otimes L(V)$. It is enough then to show that the reduction $\bar{\alpha}$ of $\alpha$ to the quotient 
	$(I_Z^{k-1}/I_Z^k \otimes L)(V)\cong Sym^{k-1}(N^*_{Z/X})\otimes L(V)$ vanishes, where $N^*_{Z/X}$ is the conormal bundle. By the assumptions that $f$ is smooth along with its restriction to $Z$, we get that the natural restriction map gives an isomorphism $N^*_{Z/X}|_{X_y}\cong N^*_{Z\cap X_y/X_y}$ and hence an isomorphism 
	
	\[Sym^{k-1}(N^*_{Z/X}) \otimes L|_{X_y}\cong Sym^{k-1}(N^*_{Z\cap X_y/X_y})\otimes L|_{X_y}.\] 
	
	By the assumption on $\alpha$, we know that the reduction of $\alpha|_{X_y}$ to $Sym^{k-1}(N^*_{Z\cap X_y/X_y})\otimes L|_{X_y}$ vanishes, and the isomorphism above gives us that $\bar{\alpha}$ vanishes at every point of $Z$, so since it is a section of a vector bundle on $Z$, it vanishes. 
\end{proof}

Proposition \ref{prop: equality of important bundle to push of ideal} follows from this lemma by choosing $X =\Gr\times \PP^3$, 
$L=\m{O}_X(d)$, $f=\pi_2$, $Y=\Gr$,  $Z=Fl_{1,2,4}$ and $V$ to be any open set of the form $\pi_2^{-1}(U)$. Then the proposition exactly tells us that the sections of $V_{k,d}$ on $U$ are the same as the sections of $(\pi_2)_*I_Z^k(d)$, 
assuming that $X,Y,Z, f, f|_Z$ are all smooth, which is clear in this case.

We shall now construct a locally free resolution of $I_Z^k$ over $X$. 
Then, after sufficient twist, its push-forward down to $\Gr$ will provide a resolution of $V_{k,d}$ using standard sheaves with known Chern classes, allowing computation of the inverse Chern polynomial of $V_{k,d}$.  

Let $\tau$ be the tautological bundle on $\Gr$, and
$\nu = (\m{O}_{\Gr}^4/\tau)^*$ denote the dual of the tautological quotient vector bundle on $\Gr$, that on a line $\ell$ gives the linear functionals on $\CC^4$ vanishing on $\ell$. 
We have a canonical map given by multiplication $\pi_2^*\nu(-1)\to \m{O}_{X}$ with image $I_Z$. 
Recall that for an algebraic variety $M$, a vector bundle $E$ on $M$ and a functional 
$\phi:E\to \m{O}_M$ with zero locus $Z(\phi)$ there is an associated Kozul complex of the form 
\[0\to \wedge^n E \to \wedge^{n-1}E \to ...\to E\stackrel{\phi}{\to} \mathcal{O}_M\to \mathcal{O}_{Z(\phi)} \to 0.\] 
This complex is exact if $\phi$ is transversal to the zero section of $E^*$ (e.g. by \cite[Proposition 1.4 (b)]{GKZ}). 
We get a complex of the form 
\begin{equation}
\label{eqn: Koszul complex}
0\to \wedge^2\pi_2^*\nu(-1)\to \pi_2^*\nu(-1) \to I_Z \to 0. 
\end{equation}

To show that it is exact, it suffices to show that the functional 
$\pi_2^*\nu(-1)\to \mathcal{O}_X$ is transversal to the zero section. This is an easy computation in coordinates. 
%Alternatively, one can use the $GL(4)$ equivariance of this complex and \ref{cor: criterion for exactness}  to reduce the verification of its exactness to its restriction to every fiber of $\pi_2$. 
%At every fiber, we recover the standard Koszul resolution of $I_\ell$ for a line $\ell$, which is exact.

In order to use the resolution (\ref{eqn: Koszul complex}) of $I_Z$ to resolve $I_Z^k$, we use some computations of symmetric powers. 

\subsubsection{Derived Symmetric Powers of Ideals} 

We first introduce the derived symmetric power functor. This functor will be used to resolve the powers of $I_Z$. 

Let $M$ be an algebraic variety over a field of characteristic $0$ and let $F\in Coh(M)$ be a coherent sheaf. The group $S_k$ acts on the $k$-fold derived tensor product 
\[F\otimes^LF\otimes^L...\otimes^LF:= F^{\otimes^L k}.\] 
We denote by $LSym^k(F)$ the derived fixed points of $S_k$ in this complex. 
In explicit terms, one chooses a locally-free resolution $E_\bullet \to F$ and then consider the actual $S_k$-invariants in the complex $(E_\bullet)^{\otimes k}$.

The main result we use regarding symmetric powers is the following: 

\begin{theorem}
	\label{thm: powers of ideal of codimension 2}
	Let $M$ be a smooth algebraic variety and let $Z$ be a smooth subvariety of codimension at most $2$. Then the multiplication map 
	$LSym^k(I_Z)\to I_Z^k$ is a quasi-isomorphism.
\end{theorem}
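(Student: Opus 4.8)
The plan is to reduce the statement to a purely local computation with the Koszul resolution. Since being a quasi-isomorphism can be tested on cohomology sheaves, and since both $LSym^k(I_Z)$ and $I_Z^k$ are compatible with (flat, exact) localization, it suffices to prove the statement locally on $M$. Locally, the smoothness of $M$ and $Z$ together with the codimension bound let me choose generators of $I_Z$ forming a regular sequence $f_1,\ldots,f_c$ with $c\in\{0,1,2\}$. The cases $c=0$ (here $Z=M$ and $I_Z=\m{O}_M$) and $c=1$ (here $I_Z$ is a line bundle, so $LSym^k I_Z = Sym^k I_Z = I_Z^{\otimes k}=I_Z^k$ with no higher terms) are immediate, and all the content lies in the case $c=2$.

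For $c=2$ I would use the local Koszul resolution $E_\bullet=[\,0\to \wedge^2 F \stackrel{d}{\to} F \to I_Z\to 0\,]$, where $F=\m{O}_M^2$ is the free module on the chosen generators and $d$ is the Koszul differential; thus $E_\bullet$ is concentrated in homological degrees $1$ and $0$, with $E_1=\wedge^2 F$ a line bundle and $E_0=F$. I then compute $LSym^k(I_Z)$ directly from this resolution: in characteristic $0$ the $S_k$-invariants of $(E_\bullet)^{\otimes k}$ split, by the Koszul sign rule, into the pieces $\wedge^j E_1 \otimes Sym^{k-j} E_0$ placed in homological degree $j$ (exterior powers on the odd-degree factor $E_1$, symmetric powers on the even-degree factor $E_0$). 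The codimension-$2$ hypothesis enters decisively here: because $E_1=\wedge^2 F$ is a line bundle, $\wedge^j E_1=0$ for $j\geq 2$, so only $j=0,1$ survive and $LSym^k(I_Z)$ is represented by the two-term complex
\[0\to \wedge^2 F \otimes Sym^{k-1}F \stackrel{\partial}{\to} Sym^k F \to 0,\]
where $\partial$ is induced by $d$ via multiplication in the symmetric algebra.

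It then remains to identify the homology. The multiplication map $\mu:Sym^k F\to I_Z^k$, sending $e_1^a e_2^b\mapsto f_1^a f_2^b$, is surjective and satisfies $\mu\circ\partial=0$ since the composite $\wedge^2 F\to F\to I_Z$ vanishes; on $H_0$ it recovers exactly the natural map $Sym^k(I_Z)\to I_Z^k$ of the theorem (using right-exactness of $Sym^k$ to identify $\operatorname{coker}\partial\cong Sym^k(I_Z)$). Hence the quasi-isomorphism is equivalent to exactness of
\[0\to \wedge^2 F \otimes Sym^{k-1}F \stackrel{\partial}{\to} Sym^k F \stackrel{\mu}{\to} I_Z^k \to 0.\]
Concretely $\partial\bigl(e_1^a e_2^b\otimes (e_1\wedge e_2)\bigr)=f_1\,e_1^a e_2^{b+1}-f_2\,e_1^{a+1}e_2^b$, so this asserts that the only syzygies among the monomials $f_1^a f_2^b$ ($a+b=k$) are the Koszul ones and that these are $\m{O}_M$-independent. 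This is the classical acyclicity of the $k$-th symmetric power of the Koszul complex of a regular sequence; I would establish it by induction on $k$, splitting off $f_2$ and using that $f_1,f_2$ is a regular sequence. For $k=1$ the sequence is just the Koszul resolution of $I_Z$ itself.

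The main obstacle is this final step, the exactness of the symmetric Koszul complex, which is precisely where the regular-sequence (hence codimension $\leq 2$) hypothesis is genuinely used; once the two-term shape of $LSym^k$ is in hand, everything else is formal. The hypothesis is moreover sharp: in codimension $c\geq 3$ the degree-one term of the Koszul resolution of $I_Z$ is $E_1=\wedge^2 F$ of rank $\binom{c}{2}>1$, so the pieces $\wedge^j E_1$ for $j\geq 2$ no longer vanish, the above truncation fails, and $LSym^k(I_Z)$ generally acquires nonzero higher homology. A secondary, standard point to record carefully is that $LSym^k$ is computed from any local free resolution and is compatible with localization, which is what legitimizes the reduction to the local case in the first step.
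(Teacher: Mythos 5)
Your proposal follows essentially the same route as the paper: reduce to the local case, represent $LSym^k(I_Z)$ by the two-term complex $\wedge^2 F\otimes Sym^{k-1}F\to Sym^k F$ coming from the Koszul resolution, and reduce everything to the exactness of $0\to\wedge^2 F\otimes Sym^{k-1}F\to Sym^k F\to I_Z^k\to 0$, which the paper verifies by an explicit induction using the regular-sequence hypothesis (induction on the top $e_1$-degree of an element of the kernel, plus an ``infinitely divisible by $x_1$'' argument for injectivity of $\partial$). The only differences are cosmetic: you leave that final exactness as a cited classical fact with a one-line induction sketch where the paper writes the induction out in full, and conversely your justification of the two-term shape of $LSym^k$ via the vanishing of $\wedge^j E_1$ for $j\ge 2$ spells out what the paper calls a ``straightforward computation.''
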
    

Explicitly, this means that if $E_\bullet \to I_Z$ is a locally-free resolution, then 
the complex $(E_\bullet^{\otimes k})^{S_k}$ has only $0$-th homology and the $0$-th homology is $I_Z^k$. 

\begin{proof}
	The case where $Z$ is of codimension $1$ is much easier than the codimension 2 case and is left to the reader. From now on, we assume that $Z$ is of codimension $2$.  
	Since we ask if a map of complexes of sheaves is a quasi-isomorphism,  the statement is local on $M$ so we may assume that $M$ is the spectrum of a Noetherian regular local ring $R$ with regular sequence 
	$x_1,...,x_n$, and that $I_Z=(x_3,...,x_n)$. Let $V=span(e_1,e_2)$ be a two dimensional vector space and consider the Koszul resolution 
	\[0 \to \wedge^2 V\otimes R \stackrel{d}{\to} V\otimes R \stackrel{d}{\to} I_Z\to 0\] 
	with $d(e_i)=x_i$, which is exact because $Z$ is a complete intersection in $spec(R)$. It follows from a straightforward computation of the symmetric power of this resolution that $LSym^k(I_Z)$ is representable by the complex 
	\[
	\big[ \wedge^2 V\otimes Sym^{k-1}(V)\otimes R \stackrel{d}{\to}  Sym^k(V) \otimes R\big], 
	\] 
	where the differential is given by $d(a\wedge b \otimes \alpha \otimes r) = a\alpha \otimes d(b)r - b\alpha \otimes d(a)r$. 
	It remains to verify that the sequence 
	\[0\to \wedge^2 V\otimes Sym^{k-1}(V)\otimes R \stackrel{d}{\to}  Sym^k(V) \otimes R \stackrel{d}{\to} I_Z^k\to 0\] 
	is exact. 
	
	The map $Sym^k(V)\otimes R\to I_Z^k$ is onto since every element of $I_Z^k$ is a sum of products of $k$ elements of $I_Z$. 
	Consider the kernel of this map. 
	Let $\alpha = \sum_{j=0}^\ell e_1^je_2^{k-j}\otimes f_j$ be an element of the kernel. 
	We shall prove by induction on $\ell$ that it is a boundary. The case $\ell = 0$ is clear.
	We have 
	\[0=d(\alpha)=\sum_{j=0}^\ell x_1^jx_2^{k-j} f_j=
	x_2^{k-\ell}\sum_{j=0}^\ell x_1^j x_2^{\ell-j} f_j.\]  
	
	Since the sequence $x_1,...,x_n$ is regular, $x_2$ is not a zero divisor so $\sum_{j=0}^\ell x_1^j x_2^{\ell-j} f_j=0$. 
	Taken modulo $x_2$ this gives 
	$x_1^\ell f_\ell \equiv 0 \mod x_2$. Since $x_1$ is not a zero divisor modulo $x_1$, this means that $f_\ell = x_2 f_\ell'$ for some $f_\ell'$. Subtracting \[d(e_1\wedge e_2 \otimes e_1^{\ell-1} e_2^{k-\ell} \otimes f_j')= e_1^\ell e_2^{k-\ell} x_2f_j'-e_1^{\ell -1}e_2^{k-\ell+1} x_1f_j'\] 
	we see that modulo boundaries $\alpha$ is equivalent to an element of the same form with $f_\ell=0$. By induction, $\alpha$ is a boundary. 
	
	It remains to show that 
	$d:\wedge^2 V\otimes Sym^{k-1}(V)\otimes R \stackrel{d}{\to}  Sym^k(V) \otimes R$ is injective. Considered modulo $x_1$, this map is given by 
	multiplication by $e_1\otimes x_2$. Since this element is not a zero-divisor of the ring $Sym(V)\otimes R / x_1$, every element in the kernel of $d$ is divisible by $x_1$. 
	Then, using again that $x_1$ is not a zero divisor, we get that the elements of $Ker(d)$ are infinitely divisible by $x_1$, hence equal to $0$.    
\end{proof}

\subsubsection{Resolving the Powers of the Ideal Sheaf of the Incidence Variety}  

We shall now combine the Koszul resolution of $I_Z$ and Theorem \ref{thm: powers of ideal of codimension 2} to produce an Eagon-Northcott type resolution to $I_Z^k$ (see \cite[Appendix B]{L}). 

The subvariety $Z\subseteq X$ is a smooth subvariety of codimension $2$, so Theorem \ref{thm: powers of ideal of codimension 2} applies to $Z$. It follows that: 

\begin{proposition}
	The canonical map $LSym^k(I_Z)\to I_Z^k$ is a quasi-isomorphism. 
\end{proposition}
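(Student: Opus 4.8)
The plan is to recognize this proposition as a direct instance of Theorem~\ref{thm: powers of ideal of codimension 2}, so that the entire task reduces to checking the three hypotheses of that theorem for the pair $(X,Z)=(\Gr\times\PP^3,\,Fl_{1,2,4})$: that $X$ is a smooth variety, that $Z$ is a smooth subvariety, and that $Z$ has codimension at most $2$ in $X$. Once these are verified, the canonical multiplication map $LSym^k(I_Z)\to I_Z^k$ is a quasi-isomorphism by that theorem applied with $M=X$.

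For the first point, $X=\Gr\times\PP^3$ is smooth as a product of the smooth Grassmannian $\Gr$ with the smooth variety $\PP^3$. For the second, $Z=Fl_{1,2,4}$ is the incidence variety of flags $(\ell,p)$ with the point contained in the line; via the projection $\pi_1$ it is the projectivization of the tautological rank-$2$ subbundle $\tau$ on $\Gr$, hence a $\PP^1$-bundle over the smooth base $\Gr$ and therefore smooth. For the third, I would count dimensions: $\dim\Gr=4$ and $\dim\PP^3=3$ give $\dim X=7$, while the $\PP^1$-bundle structure gives $\dim Z=4+1=5$, so $Z$ has codimension $2$. Alternatively, the exactness of the Koszul complex~(\ref{eqn: Koszul complex})---already established above via transversality of the functional $\pi_2^*\nu(-1)\to\m{O}_X$ to the zero section---exhibits $Z$ locally as the zero locus of a regular sequence of length $2$, which reconfirms both smoothness and codimension $2$.

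I do not expect a genuine obstacle here: the content of the proposition lies entirely in Theorem~\ref{thm: powers of ideal of codimension 2}, and what remains are only the standard geometric facts that the incidence/flag variety $Fl_{1,2,4}$ is a smooth $\PP^1$-bundle of the correct codimension. The only point requiring any care is to confirm that all smoothness requirements of the theorem are met, but since every variety in sight is a smooth projective variety over $\CC$ this is immediate.
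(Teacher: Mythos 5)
Your proof is correct and takes the same route as the paper: the paper likewise disposes of this proposition in one line by observing that $Z=Fl_{1,2,4}$ is a smooth subvariety of codimension $2$ in $X=\Gr\times\PP^3$ and invoking Theorem~\ref{thm: powers of ideal of codimension 2}. Your verification of the hypotheses (smoothness of $X$, the $\PP^1$-bundle description of $Z$ over $\Gr$, and the dimension count $7-5=2$) simply makes explicit what the paper leaves to the reader.
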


Since we have a locally-free resolution 
\[
0\to \wedge^2\pi_2^*(\nu(-1))\to \pi_2^*\nu(-1) \to I_Z\to 0,
\]
we get a resolution to the powers of $I_Z$ by taking the symmetric powers of this locally free resolution. We deduce the following result:

\begin{proposition}
	\label{proposition: the sequence}
	There is a short exact sequence on $X$ of the form 
	\[0\to \wedge^2(\pi_2^*\nu(-1))\otimes Sym^{k-1}(\pi_2^*\nu(-1))\stackrel{mul_1}{\to} Sym^k(\pi_2^*\nu(-1))\stackrel{mul_0}{\to} I_Z^k\to 0.\] 
\end{proposition}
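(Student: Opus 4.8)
The plan is to read off the short exact sequence directly from the explicit two-term complex that represents $LSym^k(I_Z)$, using the quasi-isomorphism $LSym^k(I_Z)\to I_Z^k$ established above together with the fact that $\wedge^2(\pi_2^*\nu(-1))$ is a line bundle. First I would take the Koszul resolution (\ref{eqn: Koszul complex}) as the locally-free resolution $E_\bullet \to I_Z$, with $E_0 = \pi_2^*\nu(-1)$ placed in homological degree $0$ and $E_1 = \wedge^2(\pi_2^*\nu(-1))$ in degree $1$. By definition of the derived symmetric power, $LSym^k(I_Z)$ is computed by $(E_\bullet^{\otimes k})^{S_k}$. Working in characteristic $0$ and keeping track of the Koszul sign rule, the $S_k$-invariants of the $k$-fold tensor product decompose, in homological degree $m$, as $\wedge^m E_1 \otimes Sym^{k-m}(E_0)$: the odd-degree factors $E_1$ contribute their exterior power and the even-degree factors $E_0$ their symmetric power. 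This is exactly the model computed locally in the proof of Theorem \ref{thm: powers of ideal of codimension 2}.

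The key simplification comes next: since $E_1=\wedge^2(\pi_2^*\nu(-1))$ has rank $1$, we have $\wedge^m E_1 = 0$ for every $m\geq 2$, so every term of $(E_\bullet^{\otimes k})^{S_k}$ in homological degree at least $2$ vanishes. Only the two terms
\[\wedge^2(\pi_2^*\nu(-1))\otimes Sym^{k-1}(\pi_2^*\nu(-1))\stackrel{mul_1}{\to} Sym^k(\pi_2^*\nu(-1))\]
survive, where $mul_1$ is the globalization of the differential displayed in the proof of Theorem \ref{thm: powers of ideal of codimension 2}, and $mul_0:Sym^k(\pi_2^*\nu(-1))\to I_Z^k$ is the multiplication map, realized as the augmentation of the complex onto its $0$-th homology.

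Finally I would invoke the quasi-isomorphism $LSym^k(I_Z)\simeq I_Z^k$ of Theorem \ref{thm: powers of ideal of codimension 2}, which applies since $Z$ is smooth of codimension $2$ in the smooth variety $X$. Concentration of the homology in degree $0$ forces $mul_1$ to be injective, while the identification of the $0$-th homology with $I_Z^k$ says that the cokernel of $mul_1$ is $I_Z^k$; together these give precisely the asserted short exact sequence.

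The genuine content — injectivity of $mul_1$ and exactness in the middle — is already supplied by the regular-sequence argument in the proof of Theorem \ref{thm: powers of ideal of codimension 2}, so the only work that remains is formal. I expect the main (and mild) obstacle to be the bookkeeping of this globalization: verifying that the local d\'ecalage (Koszul-sign) decomposition of $(E_\bullet^{\otimes k})^{S_k}$ patches to the global two-term complex displayed above, and that it is exactly the vanishing of $\wedge^{\geq 2}E_1$ for the line bundle $E_1$ that truncates the a priori longer complex down to two terms.
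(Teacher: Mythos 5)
Your proposal is correct and follows essentially the same route as the paper: take the Koszul resolution of $I_Z$, observe that the $S_k$-invariants of its $k$-fold tensor power reduce to the two-term complex $\wedge^2 E_1\otimes Sym^{k-1}(E_0)\to Sym^k(E_0)$ because $E_1$ is a line bundle, and then invoke Theorem \ref{thm: powers of ideal of codimension 2} to conclude that this complex has homology concentrated in degree $0$ equal to $I_Z^k$, which is exactly the asserted short exact sequence. The explicit two-term model you describe is the same one the paper exhibits inside the proof of Theorem \ref{thm: powers of ideal of codimension 2}.
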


To get from the sequence in Proposition \ref{proposition: the sequence} a resolution of $V_{k,d}$, twist by $d$ and apply the functor $R(\pi_2)_*$. 
We have a quasi-isomorphism 
\[
\big[\wedge^2(\pi_2^*\nu(-1))\otimes Sym^{k-1}(\pi_2^*\nu(-1))(d)\stackrel{mul_1}{\to} Sym^k(\pi_2^*\nu(-1))(d)\big]\cong I_Z^k(d) 
\]
in the derived category $D^b(X)$ of bounded complexes of coherent sheaves on $X$. We can rewrite the complex 
$[\wedge^2(\pi_2^*\nu(-1))\otimes Sym^{k-1}(\pi_2^*\nu(-1))(d)\stackrel{mul_1}{\to} 
Sym^k(\pi_2^*\nu(-1))(d)]$ as 
\[
	[\wedge^2(\pi_2^*\nu)\otimes Sym^{k-1}(\pi_2^*\nu)(d-k-1)\stackrel{mul_1}{\to} Sym^k(\pi_2^*\nu)(d-k)].
\]

By the projection formula and base-change, we have 
$R(\pi_2)_*(\pi_2^*F(m))\cong F\otimes_\CC R\Gamma(\PP^3,\mathcal{O}_{\PP^3}(m))$ for every coherent sheaf $F$ on $\Gr$. 
In particular, for every coherent sheaf $F$ on $\Gr$ and every $m\ge -3$, the sheaf $\pi_2^*F(m)$ is acyclic to the functor  $R(\pi_2)_*$, so for $d-k-1 \ge -3$ we get 
\begin{align*}
&R(\pi_2)_*[\wedge^2(\pi_2^*\nu)\otimes Sym^{k-1}(\pi_2^*\nu)(d-k-1)\stackrel{mul_1}{\to} Sym^k(\pi_2^*\nu)(d-k)]\cong \\    
&[\wedge^2 \nu \otimes Sym^{k-1}\nu \otimes_\CC P(d-k-1) \to Sym^k(\nu) \otimes_\CC P(d-k)].  
\end{align*}
As a result, we deduce: 
\begin{proposition}
	\label{proposition: the resolution}
	For every $d\ge k-3$, the higher cohomologies of $R(\pi_2)_* I_Z^k(d)$ vanish. Moreover, for those values of $d$ we have a short exact sequence 
	\[0\to \wedge^2 \nu \otimes Sym^{k-1}\nu \otimes_\CC P(d-k-1) \to Sym^k(\nu) \otimes_\CC P(d-k)\to 
	V_{k,d} \to 0.
	\] 
\end{proposition}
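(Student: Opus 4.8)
The plan is to apply the derived pushforward $R(\pi_2)_*$ to the twisted form of the short exact sequence of Proposition \ref{proposition: the sequence} and to read off both assertions directly from the resulting long exact sequence of higher direct images. Concretely, twisting that sequence by $d$ and using the rewriting carried out just before the statement produces the short exact sequence of sheaves on $X$
\[0\to \wedge^2(\pi_2^*\nu)\otimes Sym^{k-1}(\pi_2^*\nu)(d-k-1)\stackrel{mul_1}{\to} Sym^k(\pi_2^*\nu)(d-k)\to I_Z^k(d)\to 0.\]
The two outer terms are of the shape $\pi_2^*F(m)$, and by the projection formula together with base change one has $R(\pi_2)_*(\pi_2^*F(m))\cong F\otimes_\CC R\Gamma(\PP^3,\m{O}_{\PP^3}(m))$. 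Since $H^i(\PP^3,\m{O}_{\PP^3}(m))=0$ for all $i>0$ whenever $m\ge -3$, both outer terms become $R(\pi_2)_*$-acyclic as soon as both twists $d-k$ and $d-k-1$ are $\ge -3$; granting this (see the final paragraph for the precise range), their ordinary pushforwards are $\wedge^2\nu\otimes Sym^{k-1}\nu\otimes_\CC P(d-k-1)$ and $Sym^k(\nu)\otimes_\CC P(d-k)$, concentrated in cohomological degree $0$.

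With this acyclicity in hand the rest is formal. The long exact sequence for $R^i(\pi_2)_*$ attached to the displayed short exact sequence has vanishing entries $R^i(\pi_2)_*$ of the sub- and quotient-sheaf for every $i\ge 1$. Hence for each $i\ge 1$ the term $R^i(\pi_2)_* I_Z^k(d)$ sits between two zeros and vanishes, which is the first claim. What survives of the long exact sequence is the short exact sequence
\[0\to \wedge^2\nu\otimes Sym^{k-1}\nu\otimes_\CC P(d-k-1)\stackrel{mul_1}{\to} Sym^k(\nu)\otimes_\CC P(d-k)\to (\pi_2)_* I_Z^k(d)\to 0,\]
in which left exactness, i.e. the injectivity of $mul_1$ after pushforward, is automatic because $(\pi_2)_*$ is left exact.

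To finish I would invoke Proposition \ref{prop: equality of important bundle to push of ideal}, which identifies $(\pi_2)_* I_Z^k(d)$ with the bundle $V_{k,d}$. Substituting this identification into the sequence above yields exactly the asserted
\[0\to \wedge^2\nu\otimes Sym^{k-1}\nu\otimes_\CC P(d-k-1)\to Sym^k(\nu)\otimes_\CC P(d-k)\to V_{k,d}\to 0.\]
The content is therefore concentrated entirely in the acyclicity of $\m{O}_{\PP^3}(m)$ for $m\ge -3$; everything else is the formal machinery of the long exact sequence together with the two cited propositions.

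The step I expect to require the most care is the bookkeeping of the admissible range of $d$. The acyclicity used above needs \emph{both} twists $d-k$ and $d-k-1$ to be at least $-3$, which is automatic once $d\ge k-2$, and for such $d$ the argument goes through verbatim. The single extreme value $d=k-3$ makes the lower twist equal to $-4$, so the left-hand term acquires third cohomology along the $\PP^3$-fibers and the vanishing of the higher direct images must be inspected separately there. This is the only delicate point, and since it forces $k>d$, hence $V_{k,d}=0$, it lies entirely outside the range $d\ge k$ relevant to the enumerative count.
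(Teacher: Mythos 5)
Your argument is correct and follows essentially the same route as the paper: twist the resolution of $I_Z^k$ from Proposition \ref{proposition: the sequence}, use the projection formula and the acyclicity of $\m{O}_{\PP^3}(m)$ for $m\ge -3$ to kill the higher direct images of the two outer terms, and identify the pushforward of $I_Z^k(d)$ with $V_{k,d}$ via Proposition \ref{prop: equality of important bundle to push of ideal}. Your bookkeeping of the admissible range is in fact slightly more careful than the paper's, whose displayed condition $d-k-1\ge -3$ yields $d\ge k-2$ rather than the stated $d\ge k-3$.
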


\begin{corollary}
	\label{K-class}
	The equation
	\begin{equation} 
	\boxed{[V_{k,d}]= {d-k+3\choose 3}[Sym^k(\nu)] - {d-k+2 \choose 3}[\wedge^2 \nu][Sym^{k-1}\nu]}
	\end{equation}

	holds in $K^0(\Gr)$. 
\end{corollary}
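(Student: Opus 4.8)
The plan is to read the identity off directly from the short exact sequence established in Proposition \ref{proposition: the resolution}, using only the two structural properties of the Grothendieck group $K^0(\Gr)$: additivity on short exact sequences and multiplicativity under tensor products. No genuinely new input is required beyond that proposition.

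First I would invoke Proposition \ref{proposition: the resolution}, valid in the stated range $d\ge k-3$, which supplies the short exact sequence
\[0\to \wedge^2\nu\otimes Sym^{k-1}\nu\otimes_\CC P(d-k-1)\to Sym^k(\nu)\otimes_\CC P(d-k)\to V_{k,d}\to 0.\]
Since any short exact sequence $0\to A\to B\to C\to 0$ of coherent sheaves satisfies $[C]=[B]-[A]$ in $K^0(\Gr)$, this at once yields
\[[V_{k,d}]=[Sym^k(\nu)\otimes_\CC P(d-k)]-[\wedge^2\nu\otimes Sym^{k-1}\nu\otimes_\CC P(d-k-1)].\]

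Next I would simplify the two terms on the right. Each has the shape $E\otimes_\CC W$ for a bundle $E$ on $\Gr$ and a fixed finite-dimensional complex vector space $W$; choosing a basis of $W$ identifies $E\otimes_\CC W$ with $E^{\oplus \dim W}$, so $[E\otimes_\CC W]=(\dim W)[E]$. The relevant vector spaces are the spaces of homogeneous degree-$m$ polynomials in four variables, and counting the monomials $x^J$ with $|J|=m$ gives $\dim P(m)=\binom{m+3}{3}$; hence $\dim P(d-k)=\binom{d-k+3}{3}$ and $\dim P(d-k-1)=\binom{d-k+2}{3}$. Finally, multiplicativity of $K$-classes under tensor product gives $[\wedge^2\nu\otimes Sym^{k-1}\nu]=[\wedge^2\nu][Sym^{k-1}\nu]$, and substituting produces exactly the boxed formula.

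There is no real obstacle here: the corollary is a formal consequence of the resolution already in hand, and the only points deserving a moment's care are the monomial count $\dim P(m)=\binom{m+3}{3}$ and the implicit restriction to the range $d\ge k-3$ in which Proposition \ref{proposition: the resolution} furnishes the exact sequence. Everything else is bookkeeping in $K^0(\Gr)$.
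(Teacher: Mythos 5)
Your proposal is correct and matches the paper's own argument, which simply cites the short exact sequence of Proposition \ref{proposition: the resolution} and notes $\dim P(m)=\binom{m+3}{3}$; you have merely spelled out the implicit $K$-theoretic bookkeeping (additivity on short exact sequences, $[E\otimes_\CC W]=(\dim W)[E]$, and multiplicativity under tensor product) in more detail.
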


\begin{proof}
	Just note that $dim(P(m))={m+3 \choose 3}$. 
\end{proof}

%\subsection{The bundles}

%Let $\tau$ be the tautological bundle on $\Gr$ and $\nu=\tau^\bot$ be the bundle that for a subspace $\ell$ gives the space of functionals on $V=\CC^4$ vanishing on $\ell$.

%Letting $\ell$ vary over $\Gr$, in the exact sequence of 
%global sections of \ref{The twisted sequence} we obtain an
%exact sequence of sheaves on $\Gr$ of the form 
%\[0\to \wedge^2\nu \otimes Sym^{k-1}(\nu) \otimes Sym^{d-k-1}(V^*) \to  Sym^k(\nu) \otimes Sym^{d-k}(V^*) \to  V_{k,d}\to 0\]

%Formally, the sequence of global sections is naturally a sequence of representations of 
%$stb_{GL_4}(\ell)$ and hence give an induced exact sequence of $GL_4$-equivariant bundles on $\Gr$. 

%The bundles in this sequence which do not involve $\nu$ are trivial, 
%so they correspond to integers in $K^0$. 
%Using the fact that $dim(V^*)=4$ we get
%$dim(Sym^t(V^*))={t+3 \choose 3}$ so that we get the identity in $K^0(\Gr)$: 
%\[{d-k+2 \choose 3}[\wedge^2 \nu][Sym^{k-1}\nu] + [V_{k,d}]={d-k+3\choose 3}[Sym^k(\nu)]\]
%and hence

%\begin{equation} \label{K-class}
%[V_{k,d}]= {d-k+3\choose 3}[Sym^k(\nu)] - {d-k+2 \choose 3}[\wedge^2 \nu][Sym^{k-1}\nu]
%\end{equation}
\section{Chern Characters} 
Recall that, for a vector bundle $\mathcal{E}$ over an algebraic variety $X$, the Chern character is defined by the formula $ch(\mathcal{E})=\sum_i e^{x_i}$ where $x_i$ are virtual classes, having the property that 
\[c(\mathcal{E})=(1+x_1)(1+x_2)...(1+x_n).\]
In other words, $\sigma_i(x_1,...,x_n)=c_i(\mathcal{E})$. Chern characters are extremely useful for our computations, since $ch:K^0(X)\to H^{even}(X,\mathbb{Q})$ is a ring homomorphism.

Our final aim is to compute the inverse Chern polynomial of $V_{k,d}$. It 
is much easier first to compute the Chern character and then to convert it to 
the Chern polynomial. 
Right now, we want to recall $ch(Sym^k(\nu))$, which will be the building block
for the further computations.

\subsection{Chern Classes and Characters of $\nu$}
We start with $c(\tau^*)$. Since $\tau^*$ is two-dimensional, 
we need to compute just $c_1$ and $c_2$.
The first Chern class is given by the locus of linear dependence of two generic sections. 
In our case, we can choose the two functionals $x$ and $y$ and then the collection of lines 
on which these two are linearly dependent is the collection of lines $\ell$ 
for which $ax+by$ vanishes on the corresponding linear 2-space, and this
is equivalent to $\ell$ intersecting a specific line ($\{x=y=0\}$). 
This is the cycle representing $\sigma_1$.
The second Chern class is represented by zero set of a generic section.
Take the section $x$. Its zeroes are the lines which belong to a fixed plane,
hence representing $\sigma_{1,1}$. To summarize:
\[c(\tau^*)=1 + \sigma_1 + \sigma_{1,1}.\]
In $K^0(\Gr)$, we have $[\nu]+[\tau^*]=[V^*]=4[\CC]$ and hence 
\[ c(\nu) = \frac{1}{c(\tau^*)}= 1 - \sigma_1 + \sigma_{2}.\]

Now we want to convert to Chern characters.
For this we introduce the virtual cohomology classes $x,y$, 
such that 
\[ c(\nu) = 1 - \sigma_1 + \sigma_{2}=(1+x)(1+y).\]
In other words, $x+y=-\sigma_1$ and $xy=\sigma_2$.

\begin{align*}
& ch(\nu) = e^x + e^y=\\
&= 2 + (x + y) + \frac{(x+y)^2-2xy}{2} + \frac{(x+y)((x+y)^2-3xy)}{6}    
+ \frac{((x+y)^2-2xy)^2 - 2x^2y^2}{24} =\\
&=2 -\sigma_1 +\frac{(-\sigma_1)^2 -2\sigma_2}{2} + \frac{(-\sigma_1)((-\sigma_1)^2-3\sigma_2)}{6}          + \frac{((-\sigma_1)^2-2\sigma_2)^2 - 2\sigma_2^2}{24}  =\\
&=2 -\sigma_1 +\frac{\sigma_{1,1}-\sigma_2}{2} + \frac{\sigma_{2,1}}{6}.
\end{align*}

We conclude
\begin{equation}
\label{ch_nu}
ch(\nu) = 2 -\sigma_1 +\frac{\sigma_{1,1}-\sigma_2}{2} + \frac{\sigma_{2,1}}{6}.
\end{equation}

We shall also need $ch(\wedge^2 \nu)$. Notice that $\wedge^2 \nu$ is a line bundle and
it's first Chern class is $x+y=-\sigma_1$, so 
\begin{equation}
\label{ch_wedge_2}
ch(\wedge^2 \nu)=e^{x+y}=e^{-\sigma_1} = 1 -\sigma_1 + \frac{\sigma_{1,1}+ \sigma_2}{2} - \frac{\sigma_{2,1}}{3} + \frac{\sigma_{2,2}}{12}.
\end{equation}

We shall now compute the Chern character of $V_{k,d}$. In fact, we present two approaches to this computation. One is more straightforward but slightly technically harder, and one which uses the Adams operations. The two approaches give the same answer, hence providing additional evidence for the validity of the computation. We expect that the second approach can be generalized to other situations of this kind.

\subsection{Chern Characters of $ch(Sym^t(\nu))$ - First Computation.}

We turn now to compute $ch(Sym^t(\nu))$. We have
\[ch(Sym^t(\nu))=\sum_j e^{jx}e^{(t-j)y}=\frac{e^{(t+1)x}-e^{(t+1)y}}{e^x-e^y}.\]
Let $a=e^x-1$ and $b=e^y-1$. Then we get 

\begin{align}
\label{ch_sym} &ch(Sym^t(\nu))=\frac{(a+1)^{t+1}-(b+1)^{t+1}}{a-b} = \\
\nonumber &=(t+1) + {t+1 \choose 2} (b+a) + 
{t+1 \choose 3} \left((a+b)^2 - ab\right) + \\
\nonumber &+{t+1 \choose 4} (a+b)\left((a+b)^2 - 2ab\right) + 
{t+1 \choose 5}\left(a^2b^2 + (a+b)^2\left((a+b)^2 - 3ab\right)\right). 
\end{align}

We stop the expansion at degree 4 since $\Gr$ is 4-dimensional, 
hence cohomologies above $H^8$ are zero. 

Note that by formula (\ref{ch_nu}) we have 
\[a+b=ch(\nu)-2=-\sigma_1 +\frac{\sigma_{1,1}-\sigma_2}{2} + \frac{\sigma_{2,1}}{6}.\]
Also, 
\begin{align*}
ab&= (e^x-1)(e^y-1) = e^{x+y}-e^x-e^y + 1 = \\
&= 1 -\sigma_1 + \frac{\sigma_{1,1}+ \sigma_2}{2} - \frac{\sigma_{2,1}}{3} + \frac{\sigma_{2,2}}{12} - 2 +\sigma_1 -\frac{\sigma_{1,1}-\sigma_2}{2} - \frac{\sigma_{2,1}}{6} + 1.
\end{align*}     
In other words,
\[
ab=\sigma_2 -\frac{\sigma_{2,1}}{2} + \frac{\sigma_{2,2}}{12}.
\]
We shall also need the formulas
\[
(a+b)^2 =\left(-\sigma_1 +\frac{\sigma_{1,1}-\sigma_2}{2} + \frac{\sigma_{2,1}}{6}\right)^2 = 
\sigma_{1,1} + \sigma_2 + \frac{\sigma_{2,2}}{6}
\]
and 
\[(ab)^2 = \sigma_2^2 = \sigma_{2,2}.\] 
Now we can complete the computation we started in (\ref{ch_sym}). We have 
\begin{align*}
ch(Sym^t(\nu))=&(t+1) + {t+1 \choose 2} (a+b) + 
{t+1 \choose 3} \left((a+b)^2 - ab\right) + \\
&+{t+1 \choose 4} (a+b)\left((a+b)^2 - 2ab\right) + 
{t+1 \choose 5}\left(a^2b^2 + (a+b)^2\left((a+b)^2 - 3ab\right)\right).  
\end{align*}
Substituting our formulas for $ab$, $a+b$, $(ab)^2$, and $(a+b)^2$ we get
\begin{empheq}[box=\widefbox]{align}
\label{Sym_nu}
ch(Sym^t(\nu))= &(t+1) - {t+1 \choose 2}\sigma_1 
+\left(\frac{1}{2}{t+1 \choose 2} + {t+1\choose 3}\right)\sigma_{1,1} - \frac{1}{2}{t+1 \choose 2}\sigma_2 + \\
\nonumber&+\left(\frac{1}{6}{t+1 \choose 2} + \frac{1}{2}{t+1 \choose 3}\right)\sigma_{2,1} + \frac{1}{12}{t+1 \choose 3}\sigma_{2,2}		
\end{empheq}

\subsection{Chern Characters of $ch(Sym^t(\nu))$ - Second Computation.}
In this subsection, we shall compute the Chern character of $sym^t(\nu)$ using the Adams operations. The standard textbook about Adams operations is \cite[Chapter III]{A}. 

Recall some classical results from $K$-theory. 
Let $X$ be an algebraic variety and let $K^0(X)$ denote the $K$-theory of locally free sheaves of $\mathcal{O}_X$-modules, or equivalently vector bundles on $X$. 
For a vector bundle $E\to X$, we consider the power series 
\[
\Lambda_E(z)=\sum_{i=0}^\infty (-1)^{i} (\wedge^iE) z^i.
\]
Then $\Lambda_E(z)$ is multiplicative:
$\Lambda_{E\oplus E'}(z)=\Lambda_{E}(z)\Lambda_{E'}(z)$. 
Moreover, \[\Lambda_E(z)^{-1}=\sum_{i=0}^{\infty}Sym^i(E)z^i:=S_E(z).\]
Indeed, both power series are multiplicative and hence define two natural transformations of functors in $X$: 
$K^0(X)\to (K^0(X)[[z]])^\times$. To compare them, by the splitting principle it suffices to evaluate at line bundles. For a line bundle $L$ we have
\[\Lambda_L(z)S_L(z)=(1-Lz)\sum_{i=0}^\infty L^iz^i=1.\]

Then $d\log(\Lambda_E(z))$ is the generating function for the Adams operations, 
or more precisely \[\frac{\Lambda'_E(z)}{\Lambda_E(z)}=-\sum_{i=1}^\infty \psi^i(E)z^{i-1},\] where $\psi^i$ is the $i$-th Adams operation. Also, this follows from the additivity of $d\log(\Lambda_E(z))$ and the fact that on line bundles it gives $d\log(\Lambda_L(z))=\frac{L}{1-Lz}=-\sum_i z^{i-1}L^i=-\sum_i\psi^i(L)z^{i-1}.$
It follows that 
\[\frac{S_E'(z)}{S_E(z)}=-\frac{\Lambda'_E(z)}{\Lambda_E(z)}=\sum_{i=1}^\infty \psi^i(E)z^{i-1}.\]
To conclude, we get the classical formula  
\[S_E(z)=exp\left(\sum_{i=1}^\infty\frac{\psi^i(E)}{i}z^i\right).\]

%\begin{remark}
%Note the similarity to the definition of an $L$-function of a variety over a finite field! 
%\end{remark}

Recall that we have a Chern character isomorphism \[ch:K^0_{top}(X)\otimes \mathbb{Q}\to H^{even}(X,\mathbb{Q}),\] and we can push the Adams operations along $ch$ to get maps $\psi_H^\ell : H^{even}(X,\QQ)\to H^{even}(X,\QQ)$. 

More precisely, we define $\psi^i_H: H^{even}(X,\QQ)
\to H^{even}(X,\QQ)$ by the commutativity of the diagram 

\[
\xymatrix{
	K^0_{top}(X)\otimes \QQ \ar^{ch}[r]\ar^{\psi^i}[d] & H^{even}(X,\QQ)\ar^{\psi^i_H}[d] \\
	K^0_{top}(X)\otimes \QQ \ar^{ch}[r] & H^{even}(X,\QQ) 
}
\]

The operation $\psi_H^\ell$ is given by the formula
\[\psi_H^\ell|_{H^{2i}(X)}=\ell^{i}.\]

The standard textbook about Adams operations is \cite{A}, chapter III.

As a consequence, we get the following formula for the Chern character of symmetric powers: 

\begin{theorem}
	Let $X$ be a topological space, and let $E$ be a vector bundle on $X$. Let 
	$ch(E)=\sum_is_i$ with $deg(s_i)=2i$. Then 
	\[ch(S_E(z))=exp\left(\sum_{i=1}^\infty \sum_k i^{k-1}z^is_k\right).\]
\end{theorem}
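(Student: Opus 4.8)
The plan is to obtain the formula by pushing the closed form for $S_E(z)$ established just above, namely
\[S_E(z)=\exp\left(\sum_{i=1}^\infty\frac{\psi^i(E)}{i}z^i\right),\]
through the Chern character, and then rewriting each ingredient in cohomology by means of the two facts already recorded: the compatibility $ch\circ\psi^i=\psi^i_H\circ ch$ expressed by the defining commutative square of $\psi^i_H$, and the eigenvalue formula $\psi^i_H|_{H^{2k}(X)}=i^k$. First I would note that $ch$, being a ring homomorphism $K^0_{top}(X)\otimes\QQ\to H^{even}(X,\QQ)$, extends coefficientwise to a homomorphism of power-series rings $K^0_{top}(X)\otimes\QQ[[z]]\to H^{even}(X,\QQ)[[z]]$. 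Since the exponent $w=\sum_{i\ge1}\frac{\psi^i(E)}{i}z^i$ has vanishing constant term, $\exp(w)$ is a genuine element of the power-series ring, and the coefficient of each $z^n$ in $\exp(w)$ is a fixed $\QQ$-polynomial in the coefficients of $w$; as $ch$ preserves sums and products it preserves these polynomial expressions, so $ch$ commutes with the formal exponential. This yields
\[ch(S_E(z))=\exp\left(\sum_{i=1}^\infty\frac{ch(\psi^i(E))}{i}z^i\right).\]

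Next I would evaluate $ch(\psi^i(E))$. By the commutativity of the square defining $\psi^i_H$ we have $ch(\psi^i(E))=\psi^i_H(ch(E))=\psi^i_H\bigl(\sum_k s_k\bigr)$, and applying the eigenvalue formula $\psi^i_H|_{H^{2k}(X)}=i^k$ to each homogeneous piece $s_k\in H^{2k}(X)$ gives $ch(\psi^i(E))=\sum_k i^k s_k$. Substituting into the exponent and cancelling one factor of $i$,
\[ch(S_E(z))=\exp\left(\sum_{i=1}^\infty\frac{1}{i}\sum_k i^k s_k\,z^i\right)=\exp\left(\sum_{i=1}^\infty\sum_k i^{k-1}s_k\,z^i\right),\]
which is exactly the asserted identity.

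The only point that requires genuine attention — and the nearest thing to an obstacle — is the interchange of $ch$ with the formal exponential; every other step is a direct substitution of a fact proved earlier in the section. This interchange is purely formal once one works in the $z$-adically complete ring $H^{even}(X,\QQ)[[z]]$, so no convergence difficulty in the cohomological grading intervenes (and in our intended application the grading is bounded by $H^8$ in any case). Thus the theorem follows at once from the closed form for $S_E(z)$ together with the two properties of $\psi^i_H$.
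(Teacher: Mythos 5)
Your proof is correct and follows exactly the route the paper intends: the paper states this theorem as an immediate consequence of the closed form $S_E(z)=\exp\bigl(\sum_{i\ge 1}\frac{\psi^i(E)}{i}z^i\bigr)$, the commuting square defining $\psi^i_H$, and the eigenvalue formula $\psi^i_H|_{H^{2k}}=i^k$, which is precisely your argument. Your explicit justification of commuting $ch$ with the formal exponential is a point the paper leaves implicit, but it is the same proof.
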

Set $\phi_k(z)=\sum_{i=1}^\infty i^kz^i$ we can rewrite this as 
\[ch(S_E(z))=exp\left(\sum_k \phi_{k-1}(z)s_k\right).\]
The functions $\phi_k(z)$ satisfies $\phi_{-1}(z)=-ln(1-z), \phi_0(z)=\frac{z}{1-z}$ and $\phi_{k+1}(z)=z\partial_z( \phi_k(z))$. Hence, we can compute them easily to any $k$ we want. More precisely, we claim that for
$f(z)=\frac{z}{1-z}$ we have $\phi_k(z)=\sum_{j=0}^k a^k_j (f(z))^j$ for some numbers $a_k^j$, which we shall now compute. 
We have $z\partial_z(f(z))=f(z)+f(z)^2$ and so 
\[z\partial_z(\sum_{j=0}^k a^k_j (f(z))^j) = \sum_{j=0}^k a^k_j j(f(z))^{j-1}(f(z)+ f(z)^2).\] 
We get the recursion
\[a^{k+1}_{j}=ja^k_{j}+(j-1)a^{k}_{j-1}.\] 
If we denote $p_k(z)= \sum_j a^k_jz^j$  then we see that $p_0(z)=z$ and in general 
\[p_k(z)=((z+z^2)\partial_z)^k z .\]
Having this sequence of polynomials then we can rewrite the recursion for $\phi_k$ as $\phi_k(z)=p_k(\frac{z}{1-z})$. 
For our purpose we need only the first few of them: 
\begin{itemize}
	\item $\phi_{-1}=-ln(1-z)$
	\item $\phi_0=f(z)$
	\item $\phi_1=f(z)+(f(z))^2$
	\item $\phi_2=f(z)+3f(z)^2+2f(z)^3$
	%    \item $\phi_3=f(z)+7(f(z))^2+12(f(z))^3+6f(z)^4$
\end{itemize}
Combining the formulas
\[ch(S_E(z)) = exp\left(\sum_k\phi_{k-1}(z)s_k(E)\right)\]
and
\[ch(\nu) = 2 -\sigma_1 +\frac{\sigma_{1,1}-\sigma_2}{2} + \frac{\sigma_{2,1}}{6},\]
we get 
\begin{align*}
ch(Sym(\nu)) &= exp\left(-2ln(1-z) -\sigma_1f +\frac{\sigma_{1,1}-\sigma_2}{2}(f+f^2) + 
\frac{\sigma_{2,1}}{6}(f+3f^2+2f^3))\right)=\\
=&\left(\frac{1}{1-z}\right)^2\left\{1-\sigma_1f +
\frac{\sigma_{1,1}}{2}(f+2f^2) - \frac{\sigma_{2}}{2}f + 
\frac{\sigma_{2,1}}{6}(f+3f^2)+
\frac{1}{12}\sigma_{2,2}f^2 \right\}.
\end{align*}

Using the fact that 
$f^a(1+f)^b=\sum_{m=a}^{\infty} {b-1 + m\choose a+b-1}z^m$ and $1+f=\frac{1}{1-z}$ we get the formula 

\begin{align*}
ch(Sym^t(\nu))&=(t+1) -\sigma_1{t+1 \choose 2} +
\frac{\sigma_{1,1}}{2}\left({t+1 \choose 2}+2{t+1 \choose 3}\right) - 
\frac{\sigma_{2}}{2}{t+1 \choose 2} + \\
&+\frac{\sigma_{2,1}}{6}\left({t+1 \choose 2}+3{t+1 \choose 3}\right)+
\frac{1}{12}\sigma_{2,2}{t+1 \choose 3}, 
\end{align*}
which agrees with the result of the previous computation (see equation (\ref{Sym_nu})).

\section{Computation of $N$}
We are interested in computing $\frac{1}{c(V_{k,d})}$ 
and then extracting the degree 4 term. 
 
We start by using the computation above to compute $ch(-[V_{k,d}]) = - ch(V_{k,d})$. 
Using the formula (\ref{K-class}) and the fact that $ch$ is a ring homomorphism,
we get 
\[ch(V_{k,d}) = {d-k+3\choose 3} ch(Sym^k(\nu)) - {d-k+2 \choose 3}ch(\wedge^2 \nu)ch(Sym^{k-1}\nu).\] 

We substitute (\ref{Sym_nu}) and (\ref{ch_wedge_2}):
\begin{align*}
&ch(\wedge^2 \nu)ch(Sym^{k-1}\nu)=
\left(1 -\sigma_1 + \frac{\sigma_{1,1}+ \sigma_2}{2} - 
\frac{\sigma_{2,1}}{3} + \frac{\sigma_{2,2}}{12}\right) \cdot \\ 
&\left(k - {k \choose 2}\sigma_1 +
\left(\frac{1}{2}{k \choose 2} +  {k\choose 3}\right)\sigma_{1,1} - 
{k \choose 2} \frac{\sigma_2}{2} +\left(\frac{1}{6}{k \choose 2} + 
\frac{1}{2}{k \choose 3}\right)\sigma_{2,1} + \frac{1}{12}{k \choose 3}\sigma_{2,2}\right)= \\
=& \,k - {k+1 \choose 2}\sigma_1 + 
\left(\frac{1}{2}{k+1\choose 2}+{k+1 \choose 3}\right) \sigma_{1,1} +\frac{1}{2}{k+1 \choose 2} \sigma_2 - \\
&-\left(\frac{1}{2}{k+1\choose 3} + \frac{1}{3}{k+1\choose 2}\right) \sigma_{2,1} 
+\frac{1}{12}{k+2\choose 3} \sigma_{2,2}.
\end{align*}

We denote $u=d-k$ for brevity. 
A standard computation, using (\ref{Sym_nu}) and the previous formula, of
\[ch(V_{k,d})={u+3 \choose 3} ch(Sym^k(\nu))-{u+2 \choose 3}ch(\wedge^2\nu)ch(Sym^{k-1}(\nu)),\]  
gives the coefficients of $ch(V_{k,d})$: 
\begin{itemize}
\item $ch_0(V_{k,d})={u + 3 \choose 3} + k {u+2 \choose 2}$
\item $ch_1(V_{k,d})=-{u+2 \choose 2} {k+1 \choose 2} \sigma_1$
\item $ch_2(V_{k,d})={u+2 \choose 2 } \left(\frac{1}{2}{k+1 \choose 2} + {k+1 \choose 3}\right) \sigma_{1,1} - 
\frac{1}{2}\left({u+3 \choose 3} + {u + 2\choose 3}\right){k+1 \choose 2}\sigma_2$
\item $ch_3(V_{k,d})=\frac{2ku+3k+u}{18}{u+2 \choose 2}{k+1 \choose 2} \sigma_{2,1}$ 
\item $ch_4(V_{k,d})=\frac{1}{12}\left({u+3 \choose 3}{k+1 \choose 3}  - 
{u+2 \choose 3}{k+2\choose 3}\right)\sigma_{2,2}$
\end{itemize}	 
 
We use the Newton identities to translate from the Chern character to the Chern classes. Assume that $s_l=\sum_i x_i^l$ and $c_l$ are the elementary symmetric polynomials in the variables $x_1,...,x_n,...$. Then 
\[c_4 = \frac{s_1^4 + 8 s_3 s_1 - 6 s_1^2 s_2 + 3 s_2^2 -6s_4}{24}.\] 
 
We want to compute the degree 4 component of $c(-[V_{k,d}])$. Writing  
$c(-[V_{k,d}])=\prod_{i=1}^\infty(1+\alpha_i)$ we have a formula for 
$\sum_{i=1}^{\infty} (e^{\alpha_i}-1) = ch(dim(V_{k,d})[\CC]-[V_{k,d}])$ which we know.  

The relation between $s_l$ and the coefficients of $ch(V_{k,d})$ is that 
$s_l(-[V_{k,d}]) = -(ch_l(V_{k,d}) \cdot l!)$. 
Using this formula and the computation of the Chern character of $V_{k,d}$ 

\begin{itemize}
	\item $s_1 = {u+2 \choose 2} {k+1 \choose 2} \sigma_1$
	\item $s_2 = -{u+2 \choose 2 }\left({k+1 \choose 2} + 2{k+1 \choose 3}\right) \sigma_{1,1}+
	\left({u+3 \choose 3} + {u + 2\choose 3}\right){k+1 \choose 2}\sigma_2$
	\item $s_3 = -\frac{2ku+3k+u}{3}{u+2 \choose 2}{k+1 \choose 2}$
	\item $s_4 = 2\left(-{u+3 \choose 3}{k+1 \choose 3} + {u+2 \choose 3}{k+2\choose 3}\right)\sigma_{2,2}$        
\end{itemize} 

Or equivalently
\begin{itemize}
	\item $s_1 = \frac{1}{4}(u+2)(u+1)(k+1)k \sigma_1$
	\item $s_2 = \frac{1}{12}(u+2)(u+1)(k+1)k((2u+3) \sigma_{2} - (2k+1)\sigma_{1,1})$
	\item $s_3 = -\frac{1}{12}(u+2)(u+1)(k+1)k(2ku+3k+u)\sigma_{2,1}$
	\item $s_4 = \frac{2}{6^2}(u+2)(u+1)(k+1)k\left(-(u+3)(k-1) + u(k+2)\right)\sigma_{2,2}$
	%=\frac{(u+2)(u+1)(k+1)k}{6}(-3k+u+3+2u)\sigma_{2,2}
\end{itemize}

If we now set $\phi = \frac{1}{12}(u+2)(u+1)(k+1)k$ we can rewrite this as 

\begin{itemize}
	\item $s_1 = 3\phi \sigma_1$
	\item $s_2 = \phi((2u+3) \sigma_{2} - (2k+1)\sigma_{1,1})$
	\item $s_3 = -\phi(2ku+3k+u)\sigma_{2,1}$
	\item $s_4 = 2\phi(u-k+1)\sigma_{2,2}$ 
\end{itemize}

We deduce that 
\begin{align*}
24 c_4 =& s_1^4 + 8 s_3 s_1 - 6 s_1^2 s_2 + 3 s_2^2 -6s_4 = \\
=& (3\phi \sigma_1)^4 + 8 (-\phi(2ku+3k+u)\sigma_{2,1}) 3\phi \sigma_1 - 
6 (3\phi \sigma_1)^2 (\phi((2u+3) \sigma_{2} - (2k+1)\sigma_{1,1})) +\\
&+ 3 (\phi((2u+3) \sigma_{2} - (2k+1)\sigma_{1,1}))^2 
-12\phi(u-k+1)\sigma_{2,2}=\\ 
=3\phi& \left[ 2 \cdot 3^3 \phi^3 -4(9 \phi^2+1)(u-k+1) + 2\phi \left(2d^2-12ku+2u-10k+5\right)  
\right]\sigma_{2,2}.   
\end{align*}
Therefore the number of surfaces is

\[\boxed{N=\frac{\phi}{4} \left[  (3 \phi)^3 -2(9 \phi^2+1)(u-k+1) + 
\phi \left(2d(d+1)-12k(u+1)+5\right)  \right]}\]

\section{Remarks on Generic Position}
We have computed the intersection index of some homology classes, but does it count the actual number
of surfaces containing given $\delta$ points? For some particular choice of $\delta$ points, 
it might not be the case; but for $\delta$ generic points, we claim that $N$ is precisely the answer to the enumerative question.

There are two issues to be addressed here. The first issue is to prove that the intersection of the cycles, for a generic choice of $\delta$ points, is transversal. 

The second issue is whether a generic surface of the kind we describe has just one line of order $k$. It holds for almost every $d$ and $k$ with one particular exception.

Here we shall discuss both issues.

\subsection{Transversality}

In the computation of $N$, we intersect $\delta$ cycles in $\PP V_{k,d}$.
For each point $P\in \mathbb{P}^3$ we have the cycle $H_P=\{(\ell,S)\in \PP V_{k,d} : P\in S\}$ 
and we choose $\delta$ such cycles to compute the intersection. 
Hence, we want to show that for generic tuple $P_1,...,P_\delta$, 
the cycles $H_{P_1},...,H_{P_\delta}$ intersect transversely in 
$\PP V_{k,d}$. Since transverality is an open condition, it suffices 
to find one such tuple with a transversal intersection point. 
We may start with a chosen surface $S$ 
and choose the points $P_1,...,P_\delta$ accordingly. 

Consider the family of surfaces $S: x^kz^{\alpha}t^{d-k-\alpha}-y^kz^{\beta}t^{d-k-\beta}=0$ for $0\le \beta <\alpha \le d-k$. We shall 
introduce local coordinates on $\PP V_{k,d}$ near $S$. 
To variate the line, we replace the line $x=y=0$ 
by a nearby line $u=v=0$, where $u=x+Az+Bt$ and $v=y+Cz+Dt$.
For $A,B,C,D\in \mathbb{C}$ and $a_{p,q,r,s}\in \mathbb{C}$ for $p+q+r+s=d$, 
$p+q\ge k$, except $(p,q,r,s)=(0,k,0,d-k)$, consider the surface 
\[ S_{A,B,C,D,\{a_{p,q,r,s}\}}: \left\{
u^kz^{\alpha}t^{d-k-\alpha}-v^kz^{\beta}t^{d-k-\beta}+
\sum_{p,q,r,s}a_{p,q,r,s}u^pv^qz^rt^s = 0 \right\}.\]

The association $(A,B,C,D,\{a_{p,q,r,s}\})\mapsto (\left\{u=v=0\right\}, S_{A,B,C,D,\{a_{p,q,r,s}\})}$ gives 
an open embedding $\mathbb{C}^\delta \to \PP V_{k,d}$. Hence, 
we can use the coordinates $A,B,C,D,\{a_{p,q,r,s}\}$ to compute the Jacobian matrix 
of the set of equations $H_{P_1},...,H_{P_\delta}$ and verify transversality. 

For the points $P_1,P_2,P_3,P_4$ set 
\[P_i=(\epsilon ,\epsilon  i,i^{\frac{k}{\alpha-\beta}},1),\]
for $\epsilon$ a parameter which will be taken small enough in the end. 
The points $P_5,...,P_\delta$ will be chosen generically.
%For the points $P_5,...,P_\delta$ set
%\[P_j=\left(j^{\xi(d-k)},j^{d-k},j^k,j^{\xi k} \right)\]
%where $\xi$ is an irrational positive number.

It is easy to see that all these points are on $S$.

Now we shall compute the partial derivatives the defining equations of the $H_{P_i}$ 
with respect to $\frac{\partial}{\partial A}$, $\frac{\partial}{\partial B}$,
$\frac{\partial}{\partial C}$, $\frac{\partial}{\partial D}$,
and then by $\frac{\partial}{\partial a_{p,q,r,s}}$, where $a_{p,q,r,s}$ 
is the coefficient of $u^pv^qz^r$, for all monomials with $p+q\geq k$
except $y^kt^{d-k}$. 

We have, for $P=(x,y,z,1)$:
\begin{align*}
&\frac{\partial H_P}{\partial A} = kx^{k-1}z^{\alpha+1} \\ 
&\frac{\partial H_P}{\partial B} = k^{k-1}z^{\alpha} \\
&\frac{\partial H_P}{\partial C} = -kv^{k-1}z^{\beta+1} \\
&\frac{\partial H_P}{\partial D} = -kv^{k-1}z^{\beta} \\
&\frac{\partial H_P}{\partial a_{p,q,r,s}} = x^p y^q z^r t^s 
\end{align*}

and we want to show that the determinant of the matrix

\[J=
\begin{pmatrix}
\frac{\partial H_{P_{1,2,3,4}}}{\partial \{A, B, C, D \} } & 
\frac{\partial H_{P_{1,2,3,4}}}{\partial \{ a_{p,q,r,s} \} } \\
\\
\frac{\partial H_{P_{5,...,\delta}}}{\partial \{A, B, C, D \} } & 
\frac{\partial H_{P_{5,...,\delta}}}{\partial \{ a_{p,q,r,s} \} }
\end{pmatrix}
\]
is non-zero for some $\epsilon$. 
Here we separate the matrix into 4 blocks: the first 4 rows are separated
from the last $\delta-4$, and same with columns.

The upper-left block is
\[
\frac{\partial H_{P_{1,2,3,4}}}{\partial \{A, B, C, D \} }
=   
\begin{pmatrix}
\frac{\partial H_{P_1}}{\partial A} & \hdots & \frac{\partial H_{P_1}}{\partial D}\\
\vdots	& \ddots	& \vdots\\
\frac{\partial H_{P_4}}{\partial A} & \hdots & \frac{\partial H_{P_4}}{\partial D}
\end{pmatrix}=k\epsilon^{k-1}
\begin{pmatrix}
1 & 1 & -1 & -1\\
2^{k\frac{\alpha+1}{\alpha-\beta}} & 2^{k\frac{\alpha}{\alpha-\beta}} & -2^{k-1+\frac{\beta+1}{\alpha-\beta}} & -2^{k-1 + \frac{\beta}{\alpha-\beta}k} \\
3^{k\frac{\alpha+1}{\alpha-\beta}} & 3^{k\frac{\alpha}{\alpha-\beta}} & -3^{k-1+k\frac{\beta+1}{\alpha-\beta}} & -3^{k-1 + k\frac{\beta}{\alpha-\beta}} \\
4^{k\frac{\alpha+1}{\alpha-\beta}} & 4^{k\frac{\alpha}{\alpha-\beta}} & -4^{k-1+k\frac{\beta+1}{\alpha-\beta}} & -4^{k-1 + k\frac{\beta}{\alpha-\beta}} \\
\end{pmatrix}.
\]

We shall need a simple lemma from classical analysis cf. \cite{PS}, book II, part 5, $\S 4$, problem 48.

\begin{lemma}
	For two finite sets of distinct positive numbers $0<a_1<...<a_n$ and $0<b_1<...<b_n$, the matrix $M$ with entries $m_{i,j}={a_i}^{b_j}$ satisfies $det(M) \neq 0$.
\end{lemma}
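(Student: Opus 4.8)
The plan is to reduce the statement to the classical fact that a real exponential sum with $n$ distinct exponents has at most $n-1$ real zeros, and then to apply this to a hypothetical linear dependence among the columns of $M$. First I would pass from powers to exponentials: since the $a_i$ are positive and strictly increasing, the numbers $t_i=\ln a_i$ are real and satisfy $t_1<\dots<t_n$, and the entries become $m_{i,j}=e^{b_j t_i}$. It therefore suffices to show that the matrix $\bigl(e^{b_j t_i}\bigr)_{i,j}$ is nonsingular whenever $t_1<\dots<t_n$ and $b_1<\dots<b_n$.

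Suppose for contradiction that $\det M=0$. Then the columns are linearly dependent, so there are constants $c_1,\dots,c_n$, not all zero, with $\sum_{j=1}^n c_j e^{b_j t_i}=0$ for every $i$. Equivalently, the function $g(t)=\sum_{j=1}^n c_j e^{b_j t}$ vanishes at the $n$ distinct points $t_1,\dots,t_n$. I will obtain a contradiction from the following claim: any nonzero function $g(t)=\sum_{j=1}^n c_j e^{b_j t}$ with distinct exponents $b_1<\dots<b_n$ has at most $n-1$ real zeros. Granting the claim, $g$ has at most $n-1$ zeros, which forces all $c_j=0$ and contradicts the dependence, so $\det M\neq 0$.

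I would prove the claim by induction on $n$. We may assume all $c_j\neq 0$, discarding vanishing terms and lowering $n$ otherwise. The base case $n=1$ is immediate, since $c_1 e^{b_1 t}$ never vanishes. For the inductive step, set $h(t)=e^{-b_1 t}g(t)=c_1+\sum_{j=2}^n c_j e^{(b_j-b_1)t}$; multiplying by the nowhere-vanishing factor $e^{-b_1 t}$ leaves the zero set unchanged, so $h$ has exactly the same zeros as $g$. Differentiating gives $h'(t)=\sum_{j=2}^n c_j(b_j-b_1)e^{(b_j-b_1)t}$, an exponential sum with the $n-1$ distinct exponents $b_j-b_1$ $(j\ge 2)$ and all coefficients nonzero. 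By Rolle's theorem a zero of $h'$ lies strictly between any two consecutive zeros of $h$, so if $h$ has $m$ distinct zeros then $h'$ has at least $m-1$; by the inductive hypothesis $h'$ has at most $n-2$ zeros, whence $m-1\le n-2$, that is $m\le n-1$, proving the claim.

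The step requiring care is the Rolle bookkeeping in the induction. One must verify that passing from $h$ to $h'$ genuinely produces an exponential sum with one fewer term and still with distinct exponents, so that the induction is well-founded; this is precisely where the hypothesis that the $b_j$ are distinct (so that $b_j-b_1\neq 0$ for $j\ge 2$) is used, and it is also why the preliminary reduction to nonzero coefficients is convenient, as it guarantees $h'\not\equiv 0$ and lets the inductive hypothesis apply directly.
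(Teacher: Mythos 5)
Your proof is correct and is essentially the paper's argument in logarithmic coordinates: the paper applies the same divide-by-one-term, differentiate, Rolle induction to a generalized polynomial $\sum_j c_j x^{b_j}$ with positive roots $a_1,\dots,a_n$, while you substitute $t=\ln x$ and run the identical induction on the exponential sum $\sum_j c_j e^{b_j t}$. The bookkeeping (reduction to nonzero coefficients, distinctness of the shifted exponents, counting distinct zeros via Rolle) is handled correctly.
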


%For a polynomial $f=u^kz^\alpha-v^kz^\beta$

%$\frac{\partial}{\partial A} = ku^{k-1}z^{\alpha+1}$

%$\frac{\partial}{\partial B} = ku^{k-1}z^{\alpha}$

%$\frac{\partial}{\partial C} = -kv^{k-1}z^{\beta+1}$

%$\frac{\partial}{\partial D} = -kv^{k-1}z^{\beta}$

%$P_i=(\epsilon,\epsilon i, i^\frac{k}{\alpha-\beta}, 1)$

%$\left( \right)$

\begin{proof}
	It is a slightly more general version of Vandermonde determinant.
	Would it not be so, there would be a function $p(x)=c_1x^{a_1}+c_2x^{a_2}+...+c_nx^{a_n}$ 
	(like a polynomial, but with non-integer powers), 
	which has at least as many positive roots as monomials. 
	One might divide the function by $x^{a_1}$, and now it has a constant as one of its monomials, 
	but the same roots, and then consider its derivative: it has at least $n-1$ roots by Rolle's theorem and just $n-1$ monomials. 
	Proceeding in this way, we end up with a single monomial and at least one positive root, which is impossible.
\end{proof}

It follows that, whenever the numbers 
$k\frac{\alpha+1}{\alpha-\beta}, k\frac{\alpha}{\alpha-\beta}, k-1+k\frac{\beta+1}{\alpha-\beta}, k-1 + k\frac{\beta}{\alpha-\beta}$ are pairwise distinct, the matrix 
$\frac{1}{\epsilon^{k-1}}\frac{\partial H_{P_{1,2,3,4}}}{\partial \{A, B, C, D\}}$ is non-degenerate. 
It is always possible to choose such $\alpha$ and $\beta$.

If $\alpha=1$ and $\beta=0$, then the exponents are $2k, k, 2k-1, k-1$.
Those numbers are distinct except when $k=1$.

If $k=1$, we take $\alpha=2$ and $\beta=0$ the exponents are
$\frac{3}{2}, 1, \frac{1}{2}, 0$.
If $d-k=1$ we are not allowed to choose $\alpha=2$. 

But anyway we disregard the case $d=2, k=1$, 
since there are infinitely many lines on a quadric surface.

%It follows that for $\epsilon$ small enough, the matrix is non-degenerate if the lower-right block is non-degenerate.  

On the other hand, 
it is easy to see that the upper right block 
$\frac{\partial H_{P_{1,2,3,4}}}{\partial \{ a_{p,q,r,s} \} }$ 
of $J$ divisible by $\epsilon^k$. 

We can deform the surface replacing the equation 
$x^kz^{\alpha}t^{d-k-\alpha}-y^kz^{\beta}t^{d-k-\beta}=0$ 
with the more general equation 
\[S_{A,B,C,D,\{a_{p,q,r,s}\}}:
u^kz^{\alpha}t^{d-k-\alpha}-v^kz^{\beta}t^{d-k-\beta}+
\sum_{p,q,r,s}a_{p,q,r,s}u^pv^qz^rt^s.\] 
Then we want to deform the curves $P_1(\epsilon),...,P_4(\epsilon)$
so that they would still belong to surface and such that $P_i(0)$ would 
be on the singular line.
We take $P_i(\epsilon)$ to be
$(u_i=\epsilon ,v_i=\epsilon  i,z_i,t=1 )$, where $z_i$ will be defined 
as follows. We write the equation of $S$, substitute the values of $u_i, v_i$ and $t=1$,
so we obtain a polynomial equation in $z$ of degree $d-k$, with coefficients 
divisible by $\epsilon^k$; if we divide by $\epsilon^k$ the leading coefficient 
will be non-zero for a sufficiently small deformation. 
We choose a root $z$ of this equation analytically, 
so that in the original position, we get $z_i=i^{\frac{k}{\alpha-\beta}}$. 
Then, after deformation, we get $z_i\approx i^{\frac{k}{\alpha-\beta}}$.

It follows that for surfaces in a small neighborhood of $S$, 
one can find one-parameter families $P_1(\epsilon),...,P_4(\epsilon)$ on the surface,
such that $\frac{1}{\epsilon^{k-1}}\frac{\partial H_{P_{1,2,3,4}}}{\partial \{A, B, C, D\}}$ is non-degenerate. 
Notice that the condition that the block 
$\frac{\partial H_{P_{1,2,3,4}}}{\partial \{ a_{p,q,r,s} \} }$
is divisible by $\epsilon^k$ remains true in the deformation, since $u_i$ and $v_i$ are divisible by $\epsilon$.

Hence it suffices to show that for Zariski-generic choice of a surface and $\delta-4$
points, the lower-right block $\frac{\partial H_{P_{5,...,\delta}}}{\partial \{ a_{p,q,r,s} \} }$
in invertible. 

%Since this condition is open, 
%for almost every surface $S$ containing $\ell$ with multiplicity $k$, 
%there are 1-parameter families $P_1(\epsilon),...,P_4(\epsilon)$ such that the upper left block of $J$ is a $\epsilon^{k-1}$-multiple of an invertible matrix, and the right upper block is divisible by $\epsilon^k$. In this case, for $\epsilon$ small enough, the matrix $J$ is invertible if its lower right block is. Hence it suffices to check that the lower right block of $J$ is invertible for generic $S$ and generic points $P_5,...,P_\delta$. 

%It remains to verify that, for a generic choice of points $P_5,...,P_\delta$, 
%the lower right block is invertible. 
Geometrically, the non-degeneracy of this block correspond to the transversality of the intersections $H_{P_i}\cup \PP(V_{k,d}|_{\ell})$ at $S$ for $i=5,...,\delta$. 

The intersection $H_{P_i}\cap \PP(V_{k,d}|_\ell)$ is the linear subspace of surfaces containing the line $k$ times and containing the point $P_i$. It suffices to show, then, that the functionals defined by the substitution of the points $P_i$ in homogeneous polynomials defining such $S$-s are linearly independent. 
This is guaranteed if we choose the point $P_{i+1}$ not to lie on some surface containing $\ell$ to order $k$ and the points $P_1,...,P_i$.  

We deduce that the generic intersection of cycles of the form $H_{p_i}$ is transversal.

\subsection{Uniqueness of a Line}
We would like to show that, for $(d,k)\ne (3,1)$, the generic surface containing a line $k$-times contains a unique such line. To show this, it would suffice to prove that the dimension of the space of surfaces containing two lines $k$-times is strictly smaller than the dimension of $SingL_{k,d}$. 

\begin{theorem}
	Let $DoubL_{k,d}$ denote the subvariety of $\PP(Sym^d((\mathbb{C}^4)^*))$ of all those surfaces containing 2 lines $k$-times. If $d\ge 4$ or $k\ge 2$ then 
	\[dim(DoubL_{k,d})<dim(SingL_{k,d}).\]
\end{theorem}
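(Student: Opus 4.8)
The plan is to present $DoubL_{k,d}$ as the image of an incidence correspondence and bound its dimension stratum by stratum against $\delta := \dim SingL_{k,d}$. Write $m=\dim(V_{k,d}|_\ell)$ for the fibre dimension of $V_{k,d}$, which is independent of $\ell$ by transitivity of $\mathrm{PGL}_4$ on lines and, by Corollary \ref{K-class}, equals $\binom{u+3}{3}(k+1)-\binom{u+2}{3}k$ with $u=d-k$; thus $\dim\PP V_{k,d}=4+(m-1)=m+3$. First I would establish $\delta=m+3$. A priori $\delta\le m+3$, with equality exactly when the forgetful map $forg\colon\PP V_{k,d}\to SingL_{k,d}$ is generically finite, i.e.\ when the generic surface carries finitely many $k$-fold lines; since the fibre dimension of $forg$ is upper semicontinuous on the source, it suffices to exhibit a single surface with finitely many $k$-fold lines, and the explicit surfaces $x^kz^\alpha t^{d-k-\alpha}-y^kz^\beta t^{d-k-\beta}$ from the transversality discussion serve this purpose. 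This step also rules out the degenerate alternative in which the generic surface already has infinitely many $k$-fold lines, which would force $DoubL_{k,d}=SingL_{k,d}$.

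Next I would introduce $\widetilde{D}=\{(\ell_1,\ell_2,S):\ell_1\neq\ell_2,\ S\in\PP(I_{\ell_1}^k\cap I_{\ell_2}^k)_d\}$, so that $DoubL_{k,d}$ is its image under $(\ell_1,\ell_2,S)\mapsto S$ and $\dim DoubL_{k,d}\le\dim\widetilde{D}$. Two distinct lines in $\PP^3$ are either skew or incident, giving the skew stratum (an $8$-dimensional $\mathrm{PGL}_4$-homogeneous family, with nonempty fibre only for $d\ge 2k$) and the incident stratum ($7$-dimensional, homogeneous). Choosing coordinates with $\ell_1=\{x=y=0\}$ and $\ell_2=\{z=t=0\}$ (skew) resp.\ $\ell_2=\{x=z=0\}$ (incident), the fibre $(I_{\ell_1}^k\cap I_{\ell_2}^k)_d$ becomes a span of monomials $x^iy^jz^at^b$ cut out by $i+j\ge k,\ a+b\ge k$ (skew) resp.\ $i+j\ge k,\ i+a\ge k$ (incident). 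A direct count gives the codimensions $m-\dim(\cdot)$ in closed form, namely $\sum_{p=0}^{k-1}(p+1)(d+1-p)$ (skew) and $\tfrac{k(k+1)}{6}(3d-4k+4)$ (incident, for $d\ge 2k-1$). Since each stratum has dimension $\dim(\text{base})+\dim\PP(\text{fibre})$, the crude bound $\dim DoubL_{k,d}\le\dim\widetilde{D}$ already yields $\dim DoubL_{k,d}<m+3=\delta$ as soon as these codimensions exceed $4$ and $3$ respectively. Within the hypothesis $d\ge4$ or $k\ge2$ this holds in every case except $(d,k)=(2,2)$ and $(3,3)$; I would remark in passing that for the excluded $(d,k)=(3,1)$ the bound degenerates to equality in both strata, reflecting that a generic cubic already contains many lines so that $DoubL_{1,3}=SingL_{1,3}$ and the statement genuinely fails.

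The two remaining cases $(2,2)$ and $(3,3)$ are different in nature: here $d<2k$, the skew stratum is empty, and the incident stratum has dimension $\ge\delta$, so the crude bound fails. The reason is that $\widetilde{D}\to DoubL_{k,d}$ collapses, a generic fibre surface carrying a positive-dimensional family of $k$-fold lines. I would dispose of them with a divisibility lemma: if $S$ has degree $d<2k$ and vanishes to order $k$ along two distinct lines $\ell_1,\ell_2$ spanning a plane $H$, then $S|_H$ is a plane curve of degree $d<2k$ vanishing to orders $k,k$ along $\ell_1,\ell_2$, hence $S|_H\equiv 0$ and $H\mid S$; peeling off $H$ and inducting yields $H^{2k-d}\mid S$. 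In the cases at hand $d=k$, so $\deg S-(2k-d)=2(d-k)=0$ and the lemma forces $S=H^{k}$. Therefore $DoubL_{k,d}$ is exactly the locus of $k$-fold planes $\{H^k\}$, a $3$-dimensional family, and $\dim DoubL_{k,d}=3<\delta$ (with $\delta=6$ resp.\ $7$).

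The step I expect to be the main obstacle is precisely this handling of the low-degree cases. The difficulty is not computational: the naive incidence bound does not merely lose precision but actually reaches or exceeds $\delta$ there (already for $(2,2)$ it gives $m+4>\delta$), so the inequality cannot be obtained from dimension-counting of the correspondence alone. One must recognize that over these degenerate strata a generic surface contains a repeated plane — and therefore an entire $\PP^2$ of $k$-fold lines — and use this collapse to replace the crude bound. Verifying that this collapse is the only source of failure, so that no further case slips through and the genuine exception is exactly $(3,1)$, is what makes the dimension bookkeeping delicate.
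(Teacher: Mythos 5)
Your argument is correct and follows essentially the same route as the paper: both proofs stratify pairs of distinct lines into incident ($7$-dimensional) and skew ($8$-dimensional) families, show by counting monomials that vanishing to order $k$ on the second line cuts $\PP(V_{k,d}|_{\ell_1})$ in codimension $>3$ (resp.\ $>4$), and dispose of the degenerate low-degree cases by observing that the surface is forced to be a power of the plane spanned by two incident lines (the paper's ``$f^d$'' remark for $k=d$ is exactly your $H^{2k-d}\mid S$ lemma specialized to $d=k$). The only bookkeeping you leave implicit is the range $k\le d<2k-1$, where your closed-form incident codimension is not valid; but there the monomials $x^py^{d-p}$ with $0\le p\le k-1$ already supply $k$ independent conditions (plus more when $d>k$), so a one-line check confirms that $(2,2)$ and $(3,3)$ are indeed the only failures of the crude bound and your conclusion stands.
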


\begin{proof}
	We shall count degrees of freedom, hence bounding $dim(DoubL_{k,d})$ from above. A pair of distinct lines in $\PP^3$ may be either disjoint or intersecting at a point. 
	
	\textbf{Intersecting lines.}
	If the two lines intersect in a point, then up to projective transformation we may assume that $\ell_1=\{x=y=0\}$ and $\ell_2=\{y=z=0\}$. The dimension of the space of such pairs is $2dim(\Gr)-1=7$. Hence, it suffices to prove that the condition of vanishing on $\ell_2$ cuts a subspace of codimension greater than $3$ in $\PP(V_{k,d}|_{\ell_1})$. For a polynomial of the form 
	$\sum_{p+q\ge k, p+q+r+s=d}a_{p,q,r,s}x^py^qz^rt^s$, the condition that it vanishes to order $k$ on $\ell_2$ is equivalent to the conditions 
	$a_{p,q,r,s}=0$ if $q+r<k$. So it suffices to prove that there are at least $4$ monomials $x^py^qz^rt^s$ with $p+q\ge k, p+q+r+s=d, q+r<k$. Now consider the following example: 
	\[\begin{cases}
	(x^d,x^{d-1}t,x^{d-2}t^2,x^{d-3}t^3) & k=1, d > 3 \\
	(x^d,x^{d-1}y,x^{d-1}t,x^{d-1}z)     & 1<k<d \\
	\end{cases}\]
	
	For the case $k=d$, just note that the equation $f$ of a  surface $S$ of degree $d$ passing $d$ times through intersecting lines $\ell_1$ and $\ell_2$ must be of the form $f^d=0$ where $f$ is a linear function. Hence, the dimension of such surfaces is at most $3$, which is smaller than the dimension of $SingL_{d,d}$ for every $d>1$. 
	
	In the case $d=3,k=1$, we know that the line is not unique. In fact, every surface contains 27 lines. In the case, $d=2,k=1$, each surface contains infinitely many lines, and we shall not consider this case. 
	
	\textbf{Disjoint lines.}
	Up to projective transformation, every pair of disjoint lines can be transformed to $\ell_1=\{x=y=0\}$ and $\ell_2=\{z=t=0\}$. The dimension of pairs of disjoint lines is $8$ so it suffice to prove that vanishing on $\ell_2$ amounts to at least $5$ linearly independent conditions. Consider the examples  
	\[\begin{cases}
	(x^d,x^{d-1}y,...,y^d) & d\ge 4 \\
	(x^d,...,y^d,x^{d-1}z,x^{d-1}t)     & 1<d<4,k>1 \\
	\end{cases}\]
\end{proof}

The case $d=3, k=1$ is special and will be discussed in the next section.
\section{Particular Cases}
\subsection{A Problem in Combinatorial Geometry.}
Consider a particular case: $d=k$. It corresponds to a union of $d$ planes, having a common line.
In this case, we need $d+4$ generic points. 

We may compute the number of such plane arrangements as a particular case of our formula,
or by elementary combinatorial argument.

Let us start with our formula. If $u=0$, $k=d$ the formula simplifies to
\[24 N =6\phi \left[  (3 \phi)^3+2(9 \phi^2+1)(k-1) + \phi \left(2k^2 -10k+5\right)\right]=\]
\[=3\phi \left[ 2 \cdot 3^3 \phi^3 +2^23^2 \phi^2 (k - 1) + \phi \left(4k^2 -20k+10\right)+4(k-1)\right].\]

In our case $\phi = \frac{k(k+1)}{6}$, hence
\begin{align*}
24 N &= 
2\left(\frac{k(k+1)}{2}\right)^4  + 4\left(\frac{k(k+1)}{2}\right)^3(k-1)+\\
& +\frac{(k(k+1))^2}{12}\left(4k^2 -20k+10\right)+2k(k+1)(k-1),
\end{align*}

which simplifies to  
\[N = \frac{(k-1)k(k+1)(k+2)(k+3)(k+4)}{24^2}\left(3k^2-3k+2\right).\]

The same question can be solved by an elementary combinatorial argument.
Since we have $k+4$ points and only $k$ planes, some planes must contain 
more than one point. No plane is allowed to contain 4 points, since points
are generic. So there are $c$ planes with 3 points and $b$ planes with 2 points,
where $2c+b=4$. This gives 3 possibilities:

\begin{itemize}
	\item $c=2$, $b=0$,
	\item $c=1$, $b=2$,
	\item $c=0$, $b=4$.
\end{itemize}

In the first case, we choose 2 triples of points, each defining a plane.
The intersection of the two planes is a uniquely determined line.
Each extra point is contained in a unique plane containing that line.
So the number of ways to choose such a configuration of planes equals the number of ways to select two triples of points among $k+4$ given points, 
which gives $\frac{(k+4)!}{(k-2)!3!3!2}$ possibilities.

In the second case, 3 points specify a plane. The other two additional pairs specify lines, which have to intersect the common line of all planes.
These two lines generically intersect the first plane in 2 distinct points. 
In the plane, a line connecting 2 points is unique. Then we must select additional planes, connecting that unique line with all other points.
So configurations of this type are determined by choices of a triple and two pairs among the points. 
This gives $\frac{(k+4)!}{(k-3)!3!2^3}$ configuration.

In the third case, we have 4 pairs of points, each determining a line in space. The common line of all planes must intersect those four given lines. It is well known that for generic 4 lines,
there are precisely 2 lines intersecting all of them. So this time, 
the number of combinatorial choices must be multiplied by 2.
We have to choose 4 pairs of points, which gives 
$\frac{(k+4)!}{(k-4)!4!2^4}$ combinatorial choices, and this number has to be multiplied by 2 for the geometric reason: there are two choices for the lines intersecting the four chosen lines.

Summing up those numbers, we get 
\begin{align*}
N&=\frac{(k+4)!}{(k-2)!3!3!2}+\frac{(k+4)!}{(k-3)!3!2^3}+\frac{(k+4)!}{(k-4)!4!2^3}\\
&=\frac{(k+4)(k+3)(k+2)(k+1)k(k-1)}{24^2}\left(3k^2-3k+2\right).
\end{align*}

\subsection{Twenty Seven Lines on a Cubic Surface.}
Another particular case is the famous theorem of Cayley and Salmon about 27
lines on a generic cubic surface.

Cubic surfaces are parametrized by $\PP^{19}$. By linear algebra, a set of $19$ generic points determines
a unique cubic surface. If a generic cubic surface has $N$ lines, this cubic surface would be counted $N$ times. We might substitute $d=3$ and $k=1$ to our formula and find $N$.

In our notations, $u=d-k=2$ and  $\phi = \frac{1}{12}(u+2)(u+1)(k+1)k=2$, and hence
%\[N=\frac{\phi}{4} \left[  (3 \phi)^3 -2(9 \phi^2+1)(u-k+1) + \phi \left(2d(d+1)-12k(u+1)+5\right)  \right],\]

\[N=\frac{2}{4} \left[  (3 \cdot 2)^3 -2(9 \cdot 2^2+1)(2-1+1) + 2 \left(2\cdot 3\cdot 4-12\cdot 3+5\right)
\right]=27.\]

\subsection{Cubic scrolls.}
The remaining particular case in degree 3 is cubic scrolls through $13$ generic points.
A general cubic scroll in $\PP^3$ has a double line. 
Conversely, a general cubic surface with a double line is a cubic scroll. 
Substituting $d=3$ and $k=2$ , we get $u=1$ and  $\phi = 3$, and hence
%\[N=\frac{\phi}{4} \left[  (3 \phi)^3 -2(9 \phi^2+1)(u-k+1) + \phi \left(2d(d+1)-12k(u+1)+5\right)  \right],\]
\[N=\frac{3}{4} \left[  9^3 -2(9\cdot 3^2+1)(1-2+1) + 3 \left(2\cdot 3\cdot 4-12\cdot 4+5\right)  \right] = 504.\]

This number has been previously computed by several methods (see \cite{CV,C,M}). 

\subsection{Table of Small Values} We list several values given by the formula. 
For $d=3$ and $k=1$, the number of surfaces should be divided by 27.
\medskip
\medskip
\begin{center}
	\begin{tabular}{c|c|c|c|c|c|c}
		\backslashbox{k}{d}& 2&3 &4 &5 &6 &7  \\\hline
		1& 0 	& 27	& 320 	& 1990	& 8680		& 29960		\\\hline
		2& 10 & 504	& 7530	& 57715	& 294390	& 1143030	\\\hline
		3& 		& 175	&	9480	&	138420&	1038490 &	5210295 \\\hline
		4& 		& 		& 1330	& 75735	& 1110510	& 8312500	\\\hline
		5& 		& 		& 	 		& 6510	& 385560	& 5696355	\\\hline
		6& 		& 		& 			&				& 24150		& 1476510	\\\hline
		7& 		& 		& 			&				& 				& 73920		\\
	\end{tabular}
\end{center}
\medskip

\end{document}